\numberwithin{equation}{section}
\newtheorem{theoreme}{Theorem}[section]
\newtheorem{proposition}{Proposition}[section]
\newtheorem{remarque}[theoreme]{Remark}
\newtheorem{claim}{Claim}
\newtheorem{example}{Example}
\newtheorem{lemme}{Lemma}
\newtheorem{corollaire}[theoreme]{Corollary}
\newtheorem{definition}{Definition}
\definecolor{darkblue}{rgb}{0,0,0.7} 
\newcommand{\RR}{\ensuremath{\mathbb R}}
\newcommand{\PP}{\ensuremath{\mathbb P}}
\newcommand{\EE}{\ensuremath{\mathbb E}}
\newcommand{\NN}{\ensuremath{\mathbb N}}
\newcommand{\R}{\ensuremath{\mathcal R}}
\newcommand{\I}{\ensuremath{\mathcal I}}
\newcommand{\T}{\ensuremath{\mathcal T}}
\newcommand{\J}{\ensuremath{\mathcal J}}
\newcommand{\F}{\ensuremath{\mathcal F}}
\newcommand{\ind}{\ensuremath{\mathds 1}}
\newcommand{\Id}{\ensuremath{\operatorname{Id}}}
\newcommand{\om}{\omega}
\newcommand{\Om}{\Omega}
\newcommand{\De}{\Delta}
\newcommand{\la}{\lambda}
\newcommand{\de}{\delta}
\newcommand{\be}{\beta}
\newcommand{\ep}{\varepsilon}
\newcommand{\al}{\alpha}
\newcommand{\ga}{\gamma}
\newcommand{\Ga}{\Gamma}
\newcommand{\limla}{\lim_{\lambda \to 0}}
\newcommand{\Ph}{\widehat{P}}
\newcommand{\Xh}{\widehat{X}}
\newcommand{\pih}{\widehat{\pi}}
\newcommand{\limn}{\lim_{n\to\infty}}
\def\e{\textrm{e}}
\title{Occupation measures arising in finite stochastic games}
\author{Bruno \textsc{Jaffuel}\\[0.25cm]
\small Universit\'e Pierre et Marie Curie (Paris 6)\\
\small jaffuel@phare.normalesup.org\\[0.5cm]
Miquel \textsc{Oliu-Barton}\\[0.25cm]
\small Universit\'e Pierre et Marie Curie (Paris 6)\\ 
\small miquel.oliu.barton@normalesup.org 
}
\date{December 27, 2013}
\begin{document}
\maketitle

\abstract{Shapley~\cite{shapley53} introduced two-player zero-sum discounted stochastic games, henceforth stochastic games, a model where a state variable follows a two-controlled Markov chain, the players receive rewards at each stage which add up to $0$, and each maximizes the normalized $\la$-discounted sum of stage rewards, for some fixed discount rate $\la\in(0,1]$. 
In this paper, we study asymptotic occupation measures arising in these games, as the discount rate goes to $0$. 

\section{Introduction}
Let $\Om$ be a finite set of states and let $Q$ be a stochastic matrix over $\Om$.
A classical result is the existence of the weak ergodic limit 
$\Pi:=\limn \frac{1}{n} \sum_{m=0}^{n-1} Q^m$.
The sensitivity of the ergodic limit to small perturbations of $Q$ goes back to~\cite{schweitzer68}. 
The simplest case is that of a linear perturbation of $Q$, $Q_\ep:=\frac{1}{1+\ep}(Q+\ep P)$ $(\ep\geq 0)$, where 
$P$ is another stochastic matrix over $\Om$. A perturbation is said to be \emph{regular} if the recurrence classes 
remain constant in a neighbourhood of $0$. 
When the perturbation
is not regular, the ergodic limit of $Q_\ep$ may fail to converge, as $\ep$ tends to $0$, to
that of $Q=Q_0$.
\begin{example}\label{ex1} Let $\Om=\{1,2\}$, $Q=\Id$, and $P=(\begin{smallmatrix}
    0 & 1\\ 1 & 0
   \end{smallmatrix})$, so that $Q_\ep=\frac{1}{1+\ep}(\begin{smallmatrix}
    1 & \ep\\ \ep & 1
   \end{smallmatrix})$ for $\ep\geq 0$. Then $\Pi_\ep=\limn \frac{1}{n} \sum_{m=0}^{n-1} {Q_\ep}^m=(\begin{smallmatrix}1/2& 1/2\\ 1/2& 1/2\end{smallmatrix})$ for all $\ep>0$, whereas 
   $\Pi_0=\Id$.
\end{example}
The study of regular and nonregular perturbations has been widely treated in the litterature. 
The aim of this paper is to study a Markov chain perturbation problem arising in the asymptotic study 
of two-person zero-sum stochastic games.
An important aspect in this model (see Section~\ref{sto}) is the discount rate $\la>0$ which models
the impatience of the players. As a consequence, we will consider from now on the Abel mean $\sum_{m\geq 0}\la(1-\la)^m Q^m$ instead of the Cesaro mean.
Notice that, for any fixed stochastic matrix $Q$, Hardy-Littlewood's Tauberin Theorem~\cite{HL26} gives the equality:
$$\Pi=\limn \frac{1}{n} \sum_{m=0}^{n-1} Q^m=\limla \sum_{m\geq 0}\la(1-\la)^m Q^m.$$
Suppose now that $(Q_\la)_\la$ is a family of stochastic matrices, for $\la\in[0,1]$. It is not hard to see that if 
$Q_\la$ is a regular perturbation of $Q_0$ in a neighborhood of $0$, then again: 
$$\Pi=\limla \sum_{m\geq 0}\la(1-\la)^m Q_\la^m.$$
It is enough to write:
\begin{equation}\sum_{m\geq 0} \la(1-\la)^{m}Q_\la^m= \sum_{m\geq 0 } \la^2 (1-\la)^{m}(m+1)
\left(\frac{1}{m+1}\sum_{k=0}^{m} Q_\la^k\right),\end{equation}
and use the fact that $\lim_{m\to\infty} \frac{1}{m+1}\sum_{k=0}^{m} Q_\la^k=\Pi_\la$, which converges to $\Pi$ as $\la$ tends to $0$.
The case of non-regular perturbation is most interesting, as shows the following example. 

\begin{example}\label{ex2} For any $a\geq 0$, 
let $Q_\la(a):=\begin{pmatrix}1-\la^a& \la^a \\ \la^a & 1-\la^a\end{pmatrix}$. Note that $Q_\la(0)=(\begin{smallmatrix}0 & 1\\1 & 0\end{smallmatrix})$ is
periodic and that, for any $a>0$, $Q_\la(a)$ is a non-regular perturbation
of $Q_0(a)=\Id$.
Computation yields: 
$$\limla \sum_{m\geq 0} \la (1-\la)^{m} Q_\la(a)^{m}=
\begin{cases}
\begin{pmatrix} 1/2 & 1/2\\ 1/2 & 1/2\end{pmatrix}, & \text{if } 0\leq a<1;\\
\begin{pmatrix} 2/3 & 1/3\\ 1/3 & 2/3\end{pmatrix}, & \text{if } a=1;\\ 
\Id, & \text{if } a>1.
 \end{cases}$$
The case $a=1$ appears as the critical value and can be explained  by the fact that the perturbation and the discount rate are ``of same order''. 
 \end{example}                                                  
The convergence, as $\la$ tends to $0$, of $\sum_{m\geq 0} \la(1-\la)^{m}Q_\la^m$ needs some regularity of the
family $(Q_\la)_\la$. The following condition is a natural regularity requirement:  \\ 
\textbf{Assumption 1:} There exist $c_{\om,\om'}, e_{\om,\om'}\geq 0$ $(\om,\om' \in \Om)$ such that:
\begin{equation}\label{ass}Q_\la(\om,\om')\sim_{\la\to 0}c_{\om,\om'}\la^{e_{\om,\om'}}.
\end{equation}
The constants $c_{\om,\om'}$ and $e_{\om,\om'}$ are referred as the coefficient
and the exponent of the transition $Q_\la(\om,\om')$. 
By convention, we set $e_{\om,\om'}=\infty$ whenever $c_ {\om,\om'}=0$. \\

Assumption 1 holds in the rest of the paper. 
Note that a perturbation satisfying this assumption can be regular or non-regular. 

\subsection{From stochastic games to occupation measures}\label{sto} 
Two-person zero-sum stochastic games were introduced by Shapley~\cite{shapley53}. They are
%
are described by a $5$-tuple $(\Om, \I,\J,q,g)$, where $\Om$ is a finite set of states, $\I$ and $\J$ are finite sets of actions,
$g:\Om \times \I \times \J \to [0,1]$ is the payoff, $q: \Om \times \I \times \J \to \De(\Om)$ the transition and,
for any finite set $X$, $\De(X)$ denotes the set of probability distributions over $X$. The functions
$g$ and $q$ are bilinearly extended to $\Om\times \De(\I)\times \De(\J)$.
 The stochastic game with initial state $\om\in\Om$ and discount rate $\la\in(0,1]$ is denoted by 
$\Ga_\la(\om)$ and is played as follows:  at stage $m\geq1$, knowing the current state $\om_m$, the players choose actions  
$(i_m,j_m)\in \I\times \J$; their choice produces a stage payoff $g(\om_m,i_m,j_m)$ and influences the transition: a new state $\om_{m+1}$ is chosen according to the probability distribution $q(\cdot|\om_m,i_m,j_m)$.
At the end of the game, player $1$ receives $\sum\nolimits_{m \geq 1}\la(1-\la)^{m-1} g(\omega_m,i_m,j_m)$ from player $2$.
The game $\Ga_\la(\om)$ has a value $v_\la(\om)$, and the vector 
$v_\la=(v_\la(\om))_{\om\in\Om}$ is the unique fixed point of the so-called Shapley operator
\cite{shapley53}: $\Phi_\la:\RR^\Om\to \RR^\Om$,
\begin{equation}\label{shapley} \Phi_\la(f)(\om)=\mathrm{val}_{(s,t)\in \De(\I)\times \De(\J)}\left
 \{\la g(\om,s,t)+(1-\la)\EE_{q(\cdot|\om,s,t)}[f(\widetilde{\om})]\right \}.
 \end{equation}
From \eqref{shapley}, one deduces the existence of optimal stationary strategies $x:\Om\to \De(\I)$ and $y:\Om\to \De(\J)$. 
 The convergence of the discounted values as $\la$ tends to $0$ 
 is due to Bewley and Kohlberg~\cite{BK76}. An alternative proof was recently obtained in~\cite{OB14}.
Let $v:=\limla v_\la\in \RR^\Om$ be the vector of limit values.

If both players play stationary strategies $x$ and $y$ 
 in $\Ga_\la$, then every visit to $\om$ produces an expected payoff of $g(\om,x(\om),y(\om))$, and a transition 
 $Q(\om,\cdot):=q(\cdot|\om,x(\om),y(\om))$. Thus, the expected payoff induced by $(x,y)$, denoted by $\ga_\la(\om,x,y)$, satisfies:
\begin{equation}\label{alt}
\ga_\la(\om,x, y)=\sum_{m\geq 1}\la(1-\la)^{m-1} Q^{m-1}(\om,\om')\sum_{\om' \in \Om} g(\om',x(\om'),y(\om')).
\end{equation}
Consider a family of stationary strategies $(x_\la,y_\la)_\la$, and let $(g_\la)_\la$ and $(Q_\la)_\la$ be the corresponding families of state-payoffs and transition matrices. 
Provided that the limits exist, the boundedness of $\sum_{m\geq 1}\la(1-\la)^{m-1} Q_\la^{m-1}$ yields:
\begin{equation}\label{limga}
\limla \ga_\la(\cdot, x_\la,y_\la)=\left(\limla \sum_{m \geq 1}\la (1-\la)^{m-1} Q_\la^{m-1}\right)\left(\limla g_\la\right),
\end{equation}
where the existence of the limits clearly 
requires 
some ``regularity'' of $(x_\la)_\la$ and $(y_\la)_\la$ in a neighbourhood of $0$. 
\begin{definition} A family $(x_\la)_\la$ of stationary strategies of player $1$ is: 
\begin{itemize}
 \item [$(i)$] \emph{Regular} if there exists coefficients $c_{\om,i}, c_{\om,j}>0$ and exponents $e_{\om,i},e_{\om,j}\geq 0$ such that ${x}_\la^i(\om)
                              \sim_{\la\to 0}c_{\om,i} \la^{e_{\om,i}}$,
                              for all  $\om\in \Om$, $i\in \I$ and $j\in \J$.
\item[$(ii)$] \emph{Asymptotically optimal}
if, for any $j:\Om\to \J$ pure stationray strategy of player $2$  (or equivalently, for any $y:\Om\to \De(\J)$, or any $(y_\la)_\la$): 
$$\liminf_{\la \to 0} \ga_\la(\om,x_\la,j)\geq v(\om).$$
\end{itemize}
\end{definition}
Similar definitions hold for families of stationary strategies of player $2$. 
Regular, asymptotically strategies exists~\cite{BK76}. 
Suppose that $(x_\la)_\la$ and $(y_\la)_\la$ are regular. A direct consequence is that $(Q_\la)_\la$ satisfies Assumption $1$.
On the other hand, the existence of $\limla g_\la$ is then straightforward. 
These observations motivate the study of $(Q_\la)_\la$ under Assumption $1$. We are interested in describing the distribution
over the state space at any fraction of the game $t\in [0,1]$, given a pair of regular stationary strategies.

\subsection{Main results}
Let $(Q_\la)_\la$ be a fixed family of stochastic matrices over $\Om$ satisfying Assumption $1$.
Let 
$(X^\la_m)_{m\geq 0}$ be a Markov chain 
with transition $Q_\la$. Extend the notation $X^\la_m$, 
which makes sense for integer times, to any real positive time
by setting $X^\la_t:=X^\la_{\lfloor t \rfloor}$, where $\lfloor t \rfloor=\max\{k\in \NN\, | \, t\geq k\}$. 
The process $(X_t)_{t>0}$ is a continuous time inhomogeneous Markov chain which jumps at integer times. 

For any $\om\in \Om$, and $\la\in(0,1]$, $\sum_{m\geq 1}\la(1-\la)^{m-1}Q_\la^{m-1}(\om,\om')$ is the expected time spent in state $\om'$, 
starting from $\om$,
if the weight given to stage $m$ is $\la(1-\la)^{m-1}$. Thus, for any $\la$ and $n\in \NN$, 
the weight given to the first $n$ stages for a discount rate $\la$ is $$\varphi(\la,n):=\sum_{k=1}^n \la(1-\la)^{m-1}\,.$$ 
In particular, note that $\limla \varphi(\la,\lfloor t/\la \rfloor)=1-\e^{-t}$ so that,
asymptotically, 
the first $\lfloor t/\la \rfloor$ stages represent a \emph{fraction} $1-\e^{-t}$ of the play 
(see Figure 1). We denote this fraction of the game by ``\emph{time $t$}'' and the limit, as $t$ tends to $0$, by ``\emph{time 0}''.
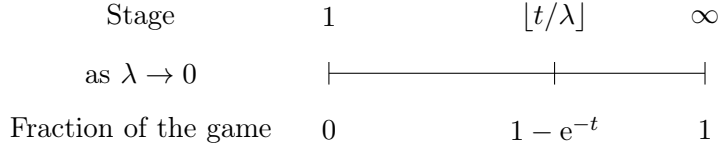
\begin{figure}\label{fig1}
\begin{center}
\begin{tikzpicture}
\draw (0,0) to (5,0);
\draw (0,-0.15) to (0,0.15);
\draw (5,-0.15) to (5,0.15);
\draw (3,-0.15) to (3,0.15);
\node   at (-2.5,0.75) {Stage};
\node   at (-2.5,0) {as $\la \to 0$};
\node   at (-2.5,-0.75) {Fraction of the game};
\node   at (0,-0.75) {$0$};
\node  at (5,-0.75) {$1$};
\node  at (3,-0.75) {$1-\e^{-t}$};
\node  at (0,0.75) {$1$};
\node  at (5,0.75) {$\infty$};
\node  at (3,0.75) {$\lfloor t/\la \rfloor $};
\end{tikzpicture}
\end{center}
\caption{Relation between the number of stages, the fraction of the game and the \emph{time}.} 
\end{figure}

In Sections~\ref{cases} and~\ref{general}, we study $P_t:=Q_\la^{\lfloor t/\la \rfloor}\in \De(\Om)$, for any $t>0$, interpreted 
as the (distribution of the) instantaneous position at time $t$. The existence of the limit is obtained, in some ``extended sense'' (see Section~\ref{instpos}) under Assumption 1.
Section~\ref{cases} explores two particular cases of the family $(Q_\la)_\la$: absorbing and critical, respectively. 
In these cases, an explicit computation of $P_t$ is obtained. Furthermore, we prove the convergence in distribution
of the Markov chains with transition $Q_\la$ to a Markov process in continuous time.

The general case is studied in Section~\ref{general}. 
For some $L\leq |\Om|$ and some set $\R=\{R_1,\dots,R_L\}$ of
subsets of $\Om$, we prove (see Theorem~\ref{main}) that the instantaneous position admits the following expression:
\begin{equation}\label{PAM}P_t=\mu \e^{A t}M,
\end{equation}
where $\mu:\Om\to \De(\R)$, $A:\R\times \R\to \RR$ and $M:\R\to \De(\Om)$. 
The elements of $\R$ are subsets of states such that, once they are reached, 
the probability of staying a strictly positive fraction of the play in them is strictly positive. 
They are the recurrent classes of a Markov chain defined in Section~\ref{proof}, which 
converges to a continuous time Markov process. Its infinitesimal generator is $A$, while $\mu$ represents the entrance laws to each of these subsets, and $M$ gives the frequency of visits to each state in the subsets of $\R$. 
From \eqref{PAM}, it follows (see Corollary~\ref{cumul}) that for any $t>0$,
$$\limla \sum_{m=1}^{\lfloor t/\la \rfloor} \la (1-\la)^{m-1} Q_\la^{m-1}= \mu\left(\int_0^t \e^{-s}\e^{As}\mathrm{d}s\right) M\,.$$
In Section~\ref{ill} we illustrate the computation of $\mu$, $A$ and $M$ in an example. Finally, using the fact that $A-\Id$ is invertible 
(by Gershgorin's Circle Theorem, for instance), we also obtain the following expression for the asymptotic payoff:
\begin{equation}\label{limga2}
\limla \ga_\la(\,\cdot\,, x_\la,y_\la)=\mu (A-\Id)^{-1}M g,
\end{equation}
where $g:=\limla g_\la\in \RR^{\Om}$.

\subsection{Notation} 
For any $\nu\in \De(\Om)$ and $B\subset \Om$ let $\nu(B):=\sum_{k\in B}\nu(k)$.
Let $P$ be some stochastic matrix over $\Om$, and let $(X_m)_{m\geq 0}$ be a Markov chain with transition $P$.
Let 
$B^c:=\Om\backslash B$ and, for any $k\in \Om$,
$k^c:=\{k\}^c$. In particular, $P(k,B)=\sum_{k'\in B} P(k,k')$.
Denote by $\Ph$ the stochastic matrix obtained by $P$ as follows. For any $k, k'\in \Om$, $k\neq k'$:
\begin{equation}
 \Ph(k,k')=\begin{cases} P(k,k')/P(k,k^c) & \text{ if } P(k,k^c)>0;\\
                         0 & \text{ if } P(k,k^c)=0,
           \end{cases}
\end{equation}
and $\Ph(k,k)=1-\sum_{k'\neq k}\Ph(k,k')$. Note that $\Ph(k,k)=0$ whenever $P(k,k^c)>0$, and that 
 $P$ and $\Ph$ have the same recurrence classes $R\in \R$. Denote by $\T$
the set of transient states. Let $\mu:\Om\to \R$ be, for any $k\in \Om$ and $R\in \R$, be the entrance probability 
from $k$ to the recurrence class $R$:
$$\mu(k,R):=\limn P^n(k,R)=\limn \Ph^n(k,R).$$
Let $\pi^R$ be the invariant measure of the restriction of $P$ to $R$, seen as a probability measure over $\Om$.
If the restriction $(X_m)_m$ to $R$ is $d$-periodic $(d\geq 2)$, let $\pi^{R}_k$ ($k=1,\dots,d$) be the invariant 
measure of $(X_{md+k})_m$. Note that, in this case, $\pi^R=\frac{1}{d}\sum_{k=1}^d \pi^{R}_k$. \\


 For any $\om \in \Om$, the probability of quitting $\om$ satisfies $Q_\la(\om,\om^c)\sim_{\la\to 0}c_\om \la^{e_\om}$, where:
\begin{equation}\label{ce2}e_{\om}:=\min\{e_{\om,\om'} \, |
\, \om'\neq \om, \ c_{\om,\om'}>0\} \quad \text{and} \quad c_\om:=
\sum_{\om'\neq \om} c_{\om,\om'} \ind_{\{e_{\om,\om'}=e_\om \}}.\end{equation}
For any $\om\in \Om$, let $\PP^\la_\om$ be the unique probability distribution over $\Om^\NN$ induced 
by $Q_\la$ and the initial state $\om$, i.e.
$\PP^\la_\om(X^\la_1=\om)=1$ and, 
for all $m\geq 1$ and $\om',\om''\in \Om$: $$\PP^\la_\om(X^\la_{m+1}=\om''|X^\la_m=\om')=Q_\la(\om',\om'').$$
For any $t>0$ and $\om\in \Om$, let $\PP^t_\om$ be a shortcut for $\PP^\la_{\om_0}(\cdot \, | \, X^\la_{t/ \la}=\om)$, 
for some fixed initial state $\om_0$. By the Markov property, the choice of the initial state is irrelevant.
Finally, let $\tau^\la_{B}:=\inf\{m\geq 1 \, | X^\la_m\in B\}$ be the first arrival to $B$ and let  
us end this section with a useful Lemma. 
\begin{lemme}\label{tec}
Let $P$ be an irreducible stochastic matrix over $\{1,\dots,n\}$ 
with invariant measure $\pi$, and let 
$S$ be a diagonal matrix with diagonal coefficients in $(0,1]$ such that
$P=\Id -S+S \Ph$. 
Then $\Ph$ is irreducible with invariant measure $\pih$ and:
$$\pi(k)=\frac{\pih(k)/S(k,k)}{\sum_{k'=1}^n
\pih(k')/S(k',k')}, \quad \text{for all } 1\leq k\leq n.$$
\end{lemme}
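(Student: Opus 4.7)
The plan is straightforward: first verify that $\Ph$ inherits irreducibility from $P$, then identify the $\Ph$-invariant measure in closed form by using the decomposition $P = \Id - S + S\Ph$.

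First, I would observe that since every diagonal entry of $S$ is strictly positive, the identity $P = \Id - S + S\Ph$ yields, off the diagonal, $P(k,k') = S(k,k)\,\Ph(k,k')$ for $k\neq k'$. Hence $P(k,k')>0$ if and only if $\Ph(k,k')>0$, so the two matrices share the same directed graph of positive off-diagonal transitions. Irreducibility of $P$ therefore transfers to $\Ph$, and $\Ph$ admits a unique invariant probability measure $\pih$.

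Next, I would translate the invariance equation $\pi P = \pi$ using the same decomposition. For any state $k'$,
\begin{equation*}
\pi(k') = \sum_k \pi(k)\,P(k,k') = \pi(k')\bigl(1 - S(k',k')\bigr) + \sum_k \pi(k)\,S(k,k)\,\Ph(k,k'),
\end{equation*}
which rearranges to
\begin{equation*}
\pi(k')\,S(k',k') = \sum_k \pi(k)\,S(k,k)\,\Ph(k,k').
\end{equation*}
Thus the measure $\nu$ defined by $\nu(k) := \pi(k)\,S(k,k)$ is $\Ph$-invariant.

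By uniqueness of the $\Ph$-invariant probability, there exists a constant $C>0$ such that $\pi(k)\,S(k,k) = C\,\pih(k)$ for every $k$, i.e., $\pi(k) = C\,\pih(k)/S(k,k)$. Enforcing $\sum_k \pi(k) = 1$ fixes $C = 1/\sum_{k'}\pih(k')/S(k',k')$ and produces the announced formula. The argument contains no real obstacle; the only subtle point is the first step, where one must use that each $S(k,k)>0$ to ensure no positive transitions are lost when passing between $P$ and $\Ph$.
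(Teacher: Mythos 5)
Your proof is correct and rests on the same key identity as the paper's: expanding the invariance equation through the decomposition $P=\Id-S+S\Ph$, which shows $\nu P=\nu$ is equivalent to $\nu S\Ph=\nu S$ (the paper runs this in the reverse direction, verifying that $\pih S^{-1}$ is $P$-invariant, while you verify that $\pi S$ is $\Ph$-invariant, but it is the same one-line computation). Your explicit check that $\Ph$ inherits irreducibility from $P$ via the shared graph of positive off-diagonal entries is a small addition the paper leaves implicit.
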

\begin{proof}
It is enough to check that the right-hand side of the equality is invariant
by $P$, which is equivalent to $\pih S^{-1} P=\pih S^{-1}$. We
easily compute:
$$\pih S^{-1} P=\pih S^{-1} (\Id -S+S \Ph)=\pih S^{-1}-\pih + \pih
\Ph=\pih S^{-1},$$
which completes the proof.
\end{proof}

\subsection{The instantaneous position at time $t$}\label{instpos}
For any $t>0$, let $Q_\la^{t/\la}:=Q_\la^{\lfloor t/\la \rfloor}$ when there is no risk of confusion. 
\begin{definition}
If the limit exists, let $P_t:\Om\to \De(\Om)$ such that for all $\om,\om'\in \Om$:
$$P_t(\om,\om')=\limla \PP^\la_\om(X^\la_{t/\la}=\om')=\limla Q_\la^{ t/\la}(\om,\om').$$ 
$P_t$ is the vector of (distributions of the) positions at time $t>0$. Let
$P_0:=\lim_{t\to 0} P_t$ be the position at time $0$.
\end{definition}
Assumption $1$ does not ensure the existence of $P_t$: 
consider a constant, periodic family $(Q_\la)_\la\equiv P:=(\begin{smallmatrix} 0 & 1\\ 1 & 0\end{smallmatrix})$. 
Then for any initial state:
\begin{equation}\label{cas}\PP^\la_{\om_0}(X^\la_{t/\la}=\om_0)=\begin{cases}
                                0 & \text{if } \lfloor t/\la \rfloor \equiv 0 \, (\mathrm{mod} \, 2)\\
                                1 & \text{if } \lfloor t/\la \rfloor \equiv 1 \, (\mathrm{mod} \, 2)
                               \end{cases}\end{equation}
so that the limit does not exist.
On the other hand, however, as $\la$ tends to $0$, the frequency of visits
to both states before stage $\lfloor t/\la\rfloor$ converges to $1/2$ for any $t>0$, and 
$$\limla \de_{\om_0}\frac{1}{2}\left( Q_\la^{\lfloor t/\la \rfloor}+ Q_\la^{\lfloor t/\la \rfloor+1}\right)=(1/2,1/2).$$ 
Moreover, both $\lfloor t/\la \rfloor$ and $\lfloor t/\la \rfloor+1$ represent the same fraction of the game.
These observations motivate the following definitions.
 Let $\widetilde{Q}_\la$ be such that $\widetilde{Q}_\la(\om,\om')\sim_{\la\to 0} Q_\la(\om,\om')\ind_{\{e_{\om,\om'}< 1\}}$, for all $\om, \om'\in \Om$, and let $N$ be the product of the periods of its recurrence classes. 
\begin{definition}\label{extpos} The \emph{extended position}  $\overline{P}_t$ at time $t\geq 0$ is obtained by averaging over $N$, i.e. 
$\overline{P}_t:\Om\to \De(\Om)$:
$$\overline{P}_t(\om,\om')=\limla \frac{1}{N}\sum_{m=0}^{N-1}  Q_\la^{\lfloor t/\la \rfloor+m}(\om,\om'), \quad \text{and } \overline{P}_0:=\lim_{t\to 0} \overline{P}_t.$$
We set $P_t:=\overline{P}_t$ when the latter exists. 
\end{definition}
Averaging over $N$, one avoids irrelevant pathologies related to periodicity. 
In the previous example, for instance, $N=2$ settled the problem. 
It clearly extends the previous definition since the existence of $P_t$ implies the existence of $\overline{P}_t$ and their equality. We prove in Theorem~\ref{main} that the latter always exists under Assumption 1. The following example shows why $N$ depends on $(\widetilde{Q}_\la)_\la$, rather than on $(Q_\la)_\la$.
\begin{example} \label{ex5} Fix $t>0$. Let
$Q_\la(a):=\begin{pmatrix}\la^a & 1-\la^a\\ 1-\la^a & \la^a\end{pmatrix}$, for $\la\in [0,1]$ and some $a\geq 0$.
Clearly, $Q_\la(a)$ is aperiodic for all $\la>0$. 
However, $P_t$ exists only for all $0\leq a<1$.
Indeed, consider the transition $Q_\la(a)$ for some $a>1$.
The probability that $X^\la_{m+1}=X^\la_m$ for some $m\leq \lfloor t/\la \rfloor $ is bounded by $(1-\la^a)^{\lfloor t/\la\rfloor }$ which converges to $0$ with $\la$. Thus, asymptotically, the chain behaves like the $2$-periodic matrix $(\begin{smallmatrix} 0 & 1\\ 1 & 0\end{smallmatrix})$ before time $t$, for any $t\geq 0$. 
\end{example} 
\section{Characterization of two special cases}\label{cases}
In this section we study two special families of stochastic matrices.
\begin{definition} A stochastic matrix over $\Om$ is \emph{absorbing} if $Q(\om,\om)=1$ for all
$\om \in \Om \backslash \{\om_0\}$. An absorbing matrix $Q$ will be identified with the vector $Q(\om_0,\cdot)\in \De(\Om)$. 
\end{definition}
\begin{definition} $(Q_\la)_\la$ is \emph{absorbing} if $Q_\la(\om,\om)=1$ for all $\la>0$, for all $\om\neq \om_0$.
\end{definition}
\begin{definition} $(Q_\la)_\la$ is \emph{critical} if $e_{\om,\om'}\geq 1$ for all $\om,\om'\in \Om$, $\om\neq \om$.
\end{definition}
\begin{definition} The \emph{infinitesimal generator} $A:\Om\times \Om \to \RR$ corresponding to a critical family $(Q_\la)_\la$ is defined as follows:
\begin{equation}\label{aaa}A(\om,\om'):=\limla \frac{Q_\la(\om,\om')}{\la} \ \ (\om'\neq \om) \quad \text{and} \quad A(\om,\om):=-\sum_{\om'\neq \om} A(\om,\om').
\end{equation}
\end{definition}
Note that $A=0$ if and only if $e_{\om,\om'}>1$ for all $\om\neq \om'$. Absorbing families are treated in Section~\ref{abs}, critical families in Section~\ref{lin}. 
In both cases, $P_t$ exists and its computation can be carried explicitly. 

\subsection{Absorbing case}\label{abs}
Let  $(Q_\la)_{\la\in (0,1]}$ be absorbing and let $\om_0$ be non-absorbing state. 
To simplify the notation, let $Q_\la$ stand for $Q_\la(\om_0,\cdot)$. For any $\om\neq \om_0$, 
let $c_{\om}:=c_{\om_0,\om}$ and $e_\om:=e_{\om_0,\om}$. Let also $e:= \min\{ e_{\om_0,\om} \, | \, \om\neq \om_0\}$ and 
$c:=\sum_{\om'\neq \om_0}c_{\om}\ind_{\{e_\om=e\}}$. Finally, let $P_t:=P_t(\om_0,\cdot)\in \De(\Om)$:

\begin{proposition}\label{om0} $P_t$ 
exists 
for any $t>0$. Moreover, one has:
  \begin{equation}\label{dic}P_t(\om_0)=
  \begin{cases} 
  1, & \text{if }e>1;\\ 
  0, & \text{if }0\leq e<1;\\ 
  \mathrm{e}^{-ct}, & \text{if }e=1.
  \end{cases}
  \end{equation}
 For any $\om\neq \om_0$: 
 \begin{equation}\label{dic2}
P_t(\om)=
 \begin{cases} 
 0, & \text{if }e>1;\\
 \frac{c_\om}{c}\ind_{\{e_\om=e\}}, & \text{if }0\leq e<1;\\
(1-\mathrm{e}^{-ct})\frac{c_\om}{c}\ind_{\{e_\om=e\}}, & \text{if }e=1.
 \end{cases} 
 \end{equation} 
\end{proposition}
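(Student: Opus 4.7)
The plan is to exploit the fact that every state distinct from $\om_0$ is absorbing, so the Markov chain starting at $\om_0$ either remains at $\om_0$ throughout the first $\lfloor t/\la \rfloor$ stages, or leaves $\om_0$ at some (first) stage $\tau^\la_{\om_0^c}\geq 2$ and is trapped afterwards in the state $X^\la_{\tau^\la_{\om_0^c}}$. The proof then reduces to two elementary one-dimensional computations: the asymptotic survival probability at $\om_0$, and the limiting entrance distribution on $\om_0^c$.

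First I would compute $P_t(\om_0)$. Writing $q_\la:=Q_\la(\om_0,\om_0^c)$, which by Assumption $1$ together with \eqref{ce2} satisfies $q_\la\sim c\la^e$ as $\la\to 0$, one has
\begin{equation*}
\PP^\la_{\om_0}\bigl(X^\la_{t/\la}=\om_0\bigr)=(1-q_\la)^{\lfloor t/\la \rfloor-1},
\end{equation*}
whose logarithm is equivalent to $-ct\la^{e-1}$ as $\la\to 0$. Distinguishing the three regimes $e>1$, $e=1$ and $0\leq e<1$, the right-hand side tends to $0$, $-ct$ and $-\infty$ respectively, which immediately yields \eqref{dic}.

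Second, for $\om\neq \om_0$, the Markov property together with the fact that $\om$ is absorbing gives, for every integer $k\geq 2$,
\begin{equation*}
\PP^\la_{\om_0}\bigl(\tau^\la_{\om_0^c}=k,\,X^\la_k=\om\bigr)=(1-q_\la)^{k-2}\,Q_\la(\om_0,\om),
\end{equation*}
whence, summing over $k$ from $2$ to $\lfloor t/\la \rfloor$,
\begin{equation*}
\PP^\la_{\om_0}\bigl(X^\la_{t/\la}=\om\bigr)=\bigl(1-(1-q_\la)^{\lfloor t/\la \rfloor-1}\bigr)\cdot \frac{Q_\la(\om_0,\om)}{q_\la}.
\end{equation*}
The first factor converges to $1-P_t(\om_0)$ by the previous step, while Assumption $1$ yields
\begin{equation*}
\frac{Q_\la(\om_0,\om)}{q_\la}\sim \frac{c_\om}{c}\,\la^{e_\om-e}\;\longrightarrow\;\frac{c_\om}{c}\,\ind_{\{e_\om=e\}}.
\end{equation*}
Multiplying the two limits and splitting into the three regimes of $e$ produces \eqref{dic2}.

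No real obstacle arises; the argument is essentially the asymptotic analysis of $(1-q_\la)^{\lfloor t/\la\rfloor}$ combined with the Markov property. The only mild subtlety is to verify that replacing $\lfloor t/\la \rfloor-1$ by $t/\la$ and $\log(1-q_\la)$ by $-q_\la$ introduces only $o(1)$ corrections inside the exponential, so that the limit is unaffected; this is a standard one-line Taylor estimate.
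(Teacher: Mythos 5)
Your proposal is correct and follows essentially the same route as the paper: the survival probability $(1-q_\la)^{\lfloor t/\la\rfloor-1}\sim \e^{-ct\la^{e-1}}$ gives \eqref{dic}, and the paper obtains \eqref{dic2} by directly conditioning on $\{X^\la_{t/\la}\neq\om_0\}$ and using that the exit state has law $Q_\la(\om_0,\om)/q_\la$ independently of the exit time, which is exactly the identity your first-exit-time decomposition and geometric summation establish. The only difference is that you make this conditional-independence step explicit rather than asserting it.
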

\begin{proof} The equalities in \eqref{dic} are immediate since, by the definition of $e$:
$$\PP^\la_{\om_0}(X^\la_{t/\la}=\om_0)=(1-c\la^e+o(\la^e))^{\lfloor \frac{t}{\la} \rfloor}\sim_{\la \to 0}\e^{-ct\la^{e-1}}.$$ 
It follows that, for $e>1$, $\sum_{\om\neq \om_0}P_t(\om)=\limla \PP^\la_{\om_0}(X^\la_{t/\la}\neq \om_0)=0$, so that 
$P_t(\om)=0$ for all $\om\neq \om_0$, in this case. Similarly if $e\leq 1$ then for any $\om\neq \om_0$:
\begin{eqnarray}\PP^\la_{\om_0}(X^\la_{t/\la}=\om)&=&\PP^\la_{\om_0}(X^\la_{t/\la}\neq \om_0)\PP^\la_{\om_0}(X^\la_{t/\la}=\om\,| \,
X^\la_{t/\la}\neq \om_0),\\
&=& \PP^\la_{\om_0}(X^\la_{t/\la}\neq \om_0)\frac{Q_\la(\om)}{\sum_{\om\neq \om_0} Q_\la(\om)}.
\end{eqnarray} 
Taking the limit, as $\la$ tends to $0$ gives \eqref{dic2}.
\end{proof}

We can clearly distinguish three cases, depending on $e$, as in Example~\ref{ex2}:
\begin{itemize}
 \item[$(a)$] \textit{Stable} ($e>1$). $P_t(\om_0)=1$ for all $t>0$, so that $\om_0$ is ``never'' left.
  \item[$(b)$] \textit{Unstable} ($0\leq e< 1$). $P_t(\om_0)=0$ 
  for all $t>0$, so that $\om_0$ is left ``immediately''.
\item[$(c)$] \textit{Critical} ($e=1$). $P_t(\om_0)\in(0,1)$ for all $t>0$. 
From \eqref{dic}, one deduces that $\om_0$ is left at time $t$ with probability (density) $c\e^{-ct}dt$.
%

\end{itemize}

\subsection{Critical case}\label{lin}
Let  $(Q_\la)_{\la\in (0,1]}$ be critical. The next result explains why the matrix $A$, defined in \eqref{aaa} is 
denoted the infinitesimal generator. 
\begin{proposition}\label{lp}
For any $t\geq 0$ and $h>0$ and $\om'\neq \om$ we have, as $h \to 0$:
\begin{itemize}
 \item[$(i)$] $\limla \PP^t_\om(X^\la_{(t+h)/\la }=\om)=1+A(\om,\om)h + o(h)$,
 \item[$(ii)$] $\limla \PP^t_\om(X^\la_{(t+h)/\la}=\om')=A(\om,\om')h + o(h)$.
\end{itemize}
\end{proposition}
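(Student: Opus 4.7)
The strategy is to use the Markov property to reduce the probability to a power of $Q_\la$, identify the limit of that power with a matrix exponential, and then Taylor expand in $h$.

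By the Markov property, setting $n(\la) := \lfloor (t+h)/\la \rfloor - \lfloor t/\la \rfloor$, one has $\PP^t_\om(X^\la_{(t+h)/\la} = \om'') = Q_\la^{n(\la)}(\om,\om'')$ for every $\om''\in\Om$, and $n(\la)\la \in [h-\la,\,h+\la]$, so $n(\la)\la \to h$ as $\la \to 0$. Since $(Q_\la)_\la$ is critical, Assumption~$1$ gives $Q_\la(\om,\om') = A(\om,\om')\la + o(\la)$ for $\om'\neq \om$ (with $A(\om,\om')=0$ when $e_{\om,\om'}>1$), and summing the rows forces $Q_\la(\om,\om) = 1 + A(\om,\om)\la + o(\la)$ with the convention of \eqref{aaa}. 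Equivalently, $Q_\la = \Id + \la A + E_\la$ with $E_\la = o(\la)$ entrywise.

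The key step is to show that $\lim_{\la\to 0} Q_\la^{n(\la)} = \e^{hA}$. For $\la$ small, $Q_\la - \Id$ has norm less than $1$, so $\log Q_\la$ is defined by the usual Mercator series, and the expansion $\log(\Id + X) = X + O(X^2)$ yields $\log Q_\la = \la A + o(\la)$. Hence $n(\la)\log Q_\la \to hA$, and by continuity of the matrix exponential,
$$Q_\la^{n(\la)} = \exp\bigl(n(\la)\log Q_\la\bigr) \longrightarrow \e^{hA}.$$
(An equivalent route uses the binomial expansion together with the uniform bound $\binom{n(\la)}{k}\|\la A + E_\la\|^{k} \leq (Ch)^{k}/k!$ to pass termwise to the limit.)

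Finally, Taylor expanding $\e^{hA} = \Id + hA + O(h^{2})$ as $h \to 0$ and reading off entries gives, on the diagonal, $1 + A(\om,\om)h + o(h)$, proving (i), and, off the diagonal, $A(\om,\om')h + o(h)$, proving (ii). The only delicate point is the interchange of the limits $\la\to 0$ and $n\to\infty$ that is implicit in $Q_\la^{n(\la)} \to \e^{hA}$; this is the standard convergence to a Markov semigroup and is controlled by the uniform $o(\la)$ estimate on $E_\la$.
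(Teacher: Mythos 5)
Your proof is correct, but it takes a genuinely different route from the paper's. You linearize the transition matrix as $Q_\la=\Id+\la A+o(\la)$ and pass to the semigroup limit $Q_\la^{n(\la)}\to \e^{hA}$ via the matrix logarithm, then Taylor expand in $h$; the paper instead argues probabilistically, decomposing the event $\{X^\la_{(t+h)/\la}=\om'\}$ according to the number of state changes in $[t/\la,(t+h)/\la]$, showing that zero jumps has probability $\exp(A(\om,\om)h)=1+A(\om,\om)h+o(h)$, that two or more jumps contribute $o(h)$, and that exactly one jump lands on $\om'$ with the right conditional probability $-A(\om,\om')/A(\om,\om)$. Your argument is shorter and in fact proves more: it gives the exact identity $\limla Q_\la^{n(\la)}=\e^{hA}$ for every $h$, which is Corollary~\ref{crit} directly, whereas the paper obtains the proposition first and the exponential formula as a consequence. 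What the paper's jump-counting decomposition buys is reusability: the same skeleton (events $F^\la_{0,h}(0)$, $F^\la_{0,h}(1)$, $F^\la_{0,h}(2^+)$) is what drives the proof of Proposition~\ref{lp2} in the general, aggregated case, where the transition matrix is not uniformly close to the identity on the fast time scale and a purely algebraic $\log Q_\la$ expansion would not apply; it also furnishes the tightness estimate used for the process-level convergence in Corollary~\ref{limit}. One small point to make explicit in your write-up: the reduction $\PP^t_\om(X^\la_{(t+h)/\la}=\om'')=Q_\la^{n(\la)}(\om,\om'')$ uses the Markov property at the deterministic time $\lfloor t/\la\rfloor$ and requires $\PP^\la_{\om_0}(X^\la_{t/\la}=\om)>0$ for the conditioning to be well defined, but this is harmless and the paper glosses over it as well.
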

\begin{proof} \textbf{Notation.} Define two deterministic times $T^\la_0:=t/\la$ and $T^\la_h:=(t+h)/\la$. 
For any $k\in \NN$, let $F^\la_{0,h}(k)$ be the event that $(X^\la_m)_{m\geq 1}$ changes $k$ times of state
in the interval $[T^\la_0,T^\la_h]$,  
and let $F^\la_{0,h}(k^+)=\bigcup_{\ell\geq k}F^\la_{0,h}(\ell)$.\\
Notice that, conditional to $\{X^\la_{t/\la}=\om\}$, the following disjoint union holds:
$$\{X^\la_{(t+h)/\la}=\om\}= F^\la_{0,h}(0) \cup \left (\{X^\la_{(t+h)/\la}=\om\}\cap F^\la_{0,h}(2^+)\right).$$
The following computation is straightforward: 
\begin{eqnarray}
 \limla \PP^t_\om(F^\la_{0,h}(0))&=& \limla \prod_{m=\lfloor t/\la \rfloor}^{\lfloor (t+h)/\la \rfloor}
 \PP^t_\om(X^\la_{m+1}=X^\la_m),\\
 &=& \limla \left(1-\sum_{\om'\neq \om} Q_\la(\om,\om')\right)^{h/\la},\\
  &=& \exp\left(-\sum_{\om\neq \om'}A(\om,\om')h\right),\\ \label{az}
 &=& 1 + A(\om,\om)h+o(h), \quad \text{ as } h\to 0.
\end{eqnarray}
On the other hand:
\begin{equation}\label{2jumps}\PP^t_\om(F^\la_{0,h}(2+))\leq \max_{\om'\in \Om} \PP^t_{\om'}(F^\la_{0,h}(1+))^2=\max_{\om'\in \Om}\left(1-\PP^t_{\om'}(F^\la_{0,h}(0)\right)^2.
\end{equation}
Therefore,  $\limla \PP^t_\om(F^\la_{0,h}(2+))=o(h)$ as $h$ tends to $0$ which, together with \eqref{az}, proves $(i)$. Similarly, conditional on $\{X^\la_{t/\la}=\om\}$:
$$\{X^\la_{(t+h)/\la }=\om')\}= \{X^\la_{(t+h)/\la}=\om'\} \cap \left(F^\la_{0,h}(1) \cup  F^\la_{0,h}(2+)\right),$$
so that by \eqref{2jumps}:
$$\limla \PP^t_\om(X^\la_{(t+h)/\la }=\om')=\limla \PP^t_\om(F^\la_{0,h}(1),\, X^\la_{(t+h)/\la }=\om')+o(h).$$
On the other hand, for any $\la$ and $m$: $$\PP^\la(X^\la_{m+1}=\om'|X^\la_m=\om, X^\la_{m+1}\neq \om)=\frac{Q_\la(\om,\om')}{Q_\la(\om,\om^c)}.$$
Finally, note that by \eqref{2jumps}, $\limla \PP^t_\om(F^\la_{0,h}(1))=\limla 1-\PP^t_\om(F^\la_{0,h}(0))+o(h)$, and that 
$\limla \frac{Q_\la(\om,\om')}{Q_\la(\om,\om^c)}=-\frac{A(\om,\om')}{A(\om,\om)}$ for all $\om\neq \om'$. 
Consequently, as $h$ tends to $0$:
\begin{eqnarray*}\label{ert}\limla \PP^t_\om(F^\la_{0,h}(1),\, X^\la_{(t+h)/\la}=\om')&=&\limla 
\frac{Q_\la(\om,\om')}{Q_\la(\om,\om^c)}\left(1-\PP^t_\om(F^\la_{0,h}(0))+o(h)\right),\\
&=&-\frac{A(\om,\om')}{A(\om,\om)}\left(1-\e^{A(\om,\om)h}\right),\\
&=&A(\om,\om')h+o(h).
\end{eqnarray*}
\end{proof}

\begin{corollaire}\label{limit} The processes $(X^\la_{t/\la})_{t\geq 0}$ converge, as $\la$ tends to $0$, to a Markov process $(Y_t)_{t\geq 0}$ with generator $A$.
\end{corollaire}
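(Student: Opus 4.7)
The plan is to derive this corollary from Proposition~\ref{lp} in two steps: first prove convergence of the semigroups $Q_\la^{t/\la}\to \e^{At}$, then upgrade to process convergence via the Markov property and a tightness estimate.

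For the semigroup convergence, note that since the family is critical, $Q_\la(\om,\om')=O(\la)$ for $\om\neq\om'$, so $Q_\la = \Id+\la B_\la$ with $\limla B_\la=A$ (entries with $e_{\om,\om'}>1$ contribute $0$ to $A$, while entries with $e_{\om,\om'}=1$ contribute $c_{\om,\om'}$). The standard exponential limit then yields $\limla Q_\la^{\lfloor t/\la\rfloor}=\e^{At}$. Equivalently, parts $(i)$--$(ii)$ of Proposition~\ref{lp} amount to the statement that the limiting kernel $u_t:=\limla Q_\la^{t/\la}$ satisfies the Kolmogorov forward equation $\partial_t u_t=u_t A$ with $u_0=\Id$, whose unique solution is $\e^{At}$.

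For the finite-dimensional distributions, the Markov property of $X^\la$ gives, for any $0=t_0\leq t_1<\cdots<t_n$ and $\om_0,\dots,\om_n\in\Om$,
\begin{equation}
\PP^\la_{\om_0}\bigl(X^\la_{t_1/\la}=\om_1,\dots,X^\la_{t_n/\la}=\om_n\bigr)=\prod_{k=1}^n Q_\la^{(t_k-t_{k-1})/\la}(\om_{k-1},\om_k),
\end{equation}
and the previous paragraph identifies the $\la\to 0$ limit of the right-hand side with the joint distribution of the continuous-time Markov process $Y$ with generator $A$ and $Y_0=\om_0$.

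The main obstacle is upgrading this finite-dimensional convergence to convergence of processes in the Skorokhod topology on $D([0,\infty),\Om)$. This requires a tightness estimate, which in the finite-state setting reduces to bounding the probability of multiple jumps of the rescaled chain in a short time interval — an estimate of exactly the form~\eqref{2jumps} appearing in the proof of Proposition~\ref{lp}. Uniformizing that estimate over all starting states and all $t$ in a compact window — which is possible thanks to the finite state space and the uniform bound on $B_\la$ — is the step that requires the most care.
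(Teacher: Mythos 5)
Your proposal is correct and follows essentially the same route as the paper: the limit is identified via Proposition~\ref{lp} (your semigroup computation $Q_\la=\Id+\la B_\la$, $B_\la\to A$ is just a more algebraic restatement of its content, and the finite-dimensional distributions then follow from the Markov property), and tightness is obtained from the bound on two or more jumps in a short window, i.e.\ the estimate \eqref{2jumps} from Proposition~\ref{lp}-$(ii)$, uniformized over the finite state space. The only difference is one of presentation: you spell out the finite-dimensional-distribution step and the Skorokhod-topology framework explicitly, whereas the paper compresses this into the standard tightness criterion for c\`adl\`ag processes with discrete values.
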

\begin{proof}
 The limit is identified by Proposition~\ref{lp}. The
 tightness is a consequence of the bound in Proposition~\ref{lp}-$(ii)$, which implies that for any $T>0$,
 uniformly in $\la>0$:
 $$\lim_{\ep\to 0} \PP\left(\exists t_1,t_2\in [0,T] \, | \, t_1<t_2<t_1+\ep,\, X^\la_{t_i^-/\la}\neq X^\la_{t_i^-/\la},\, i\in \{1,2\}
 \right)=0,$$
which is precisely the tightness criterion for c\`adl\`ag process with discrete values.
 \end{proof}

The following result is both a direct consequence of Proposition~\ref{lp} or Corollary~\ref{limit}.
\begin{corollaire}\label{crit} $P_t$ exists for any $t>0$ and satifies $P_t=\e^{At}$.
\end{corollaire}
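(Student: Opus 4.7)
Two complementary routes are available, mirroring the dual invocation ``Proposition~\ref{lp} or Corollary~\ref{limit}''. The slickest is via Corollary~\ref{limit}: the c\`adl\`ag processes $(X^\la_{t/\la})_{t\geq 0}$ converge in distribution to the continuous-time Markov process $(Y_t)_{t\geq 0}$ with generator $A$, so their one-dimensional marginals converge at every continuity point of the law of $Y_t$. Since $(Y_t)$ is a finite-state Markov chain started at a point, every $t>0$ is a continuity point, and therefore
\[
P_t(\om,\om')=\limla \PP^\la_\om\!\left(X^\la_{t/\la}=\om'\right)=\PP\!\left(Y_t=\om'\mid Y_0=\om\right).
\]
By the standard construction of a finite-state continuous-time Markov chain from its generator, the right-hand side equals $\e^{At}(\om,\om')$.

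A more hands-on, semigroup-style derivation uses only Proposition~\ref{lp}. The Markov property for the discrete-time chain gives the exact identity
\[
Q_\la^{\lfloor(t+h)/\la\rfloor}(\om_0,\om')=\sum_{\om\in\Om} Q_\la^{\lfloor t/\la\rfloor}(\om_0,\om)\,\PP^t_\om\!\left(X^\la_{(t+h)/\la}=\om'\right),
\]
and since $\Om$ is finite the limit $\la\to 0$ commutes with the sum. Proposition~\ref{lp} then yields
\[
P_{t+h}=P_t\bigl(\Id+Ah+o(h)\bigr)\quad\text{as }h\to 0^+,
\]
so $t\mapsto P_t$ is right-differentiable on $(0,\infty)$ with $\dot P_t=P_tA$; applying the same proposition with $t=0$ provides the initial condition $\lim_{h\to 0^+}P_h=\Id$. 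The unique matrix-valued solution of this Cauchy problem is $P_t=\e^{At}$.

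The only point requiring care is the existence of the limit defining $P_t$. The first route absorbs this into the tightness-plus-identification packaging already present in Corollary~\ref{limit}. For the second route one extracts subsequential limits by Bolzano--Weierstrass, using that every entry of $Q_\la^{\lfloor t/\la\rfloor}$ lies in $[0,1]$; each accumulation point satisfies the same Cauchy problem, and uniqueness forces the whole family to converge to $\e^{At}$.
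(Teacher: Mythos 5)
Your argument is correct and follows exactly the two routes the paper itself names (the paper states the corollary as ``a direct consequence of Proposition~\ref{lp} or Corollary~\ref{limit}'' and omits the details you supply): identification of one-dimensional marginals from the process convergence, or the Chapman--Kolmogorov/semigroup derivation of $\dot P_t=P_tA$ with $P_0=\Id$ from the infinitesimal estimates. Your closing remark on existence via subsequential limits and uniqueness of the Cauchy problem is the right way to close the circularity that the paper glosses over, so nothing essential is missing.
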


 
\section{The general case}\label{general}
In this section we drop the assumption of $(Q_\la)_\la$ being critical or absorbing.
Let us start by noticing that in Proposition~\ref{lp}, the time $t/\la$ may be replaced by 
$t/\la\pm1/\la^\de$, for any $0<\de<1$. That is:
\begin{equation}\label{remgen}P_t(\om,\om')=\limla \PP^\la_\om\left(X^\la_{t/\la\pm1/\la^\de}=\om'\right)
=\e^{At}(\om,\om'). 
\end{equation}
Note that, as $\la$ tends to $0$, $t/\la\pm1/\la^\de$ also corresponds to time $t$. this remark gives 
an idea of 
the flexibility to the terminology ``position at time $t$''. 
Our main result is the following.

\begin{theoreme}\label{main} There exists $L\leq |\Om|$, subsets $\R=\{R_1,\dots,R_L\}$ of $\Om$, $\mu:\Om\to \De(\R)$,
$A:\R \times \R \to \RR$ and $M:\R\times \Om\to \De(\Om)$ such that ${P}_t=\mu \e^{At}M$, for all $t\geq 0$.
\end{theoreme}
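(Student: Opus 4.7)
The strategy is a two-timescale analysis, separating transitions of $Q_\la$ into \emph{fast} ($e_{\om,\om'}<1$), \emph{critical} ($e_{\om,\om'}=1$) and \emph{slow} ($e_{\om,\om'}>1$). The fast transitions govern the dynamics up to times of order $o(1/\la)$, while critical transitions drive the slow scale of order $1/\la$; slow transitions are negligible on both scales.

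\emph{Construction of $\R$, $\mu$ and $M$.} First, I would consider the auxiliary family $\widetilde Q_\la$ of Definition~\ref{extpos}, which retains only the fast transitions. By Assumption~1, the support of $\widetilde Q_\la$ is independent of $\la$ for small $\la$, so its set of recurrence classes $\R=\{R_1,\dots,R_L\}$ is well defined and stable. For any $\om\in\Om$ and $R_i\in\R$, let $\mu(\om,R_i)$ be the entrance probability from $\om$ to $R_i$ under $\widetilde Q_\la$, as in the Notation subsection. Let $M(R_i,\cdot):=\pi^{R_i}\in\De(\Om)$, the invariant measure of the restriction of $\widetilde Q_\la$ to $R_i$, averaged over its period if needed (this is why the averaging over $N$ appears in Definition~\ref{extpos}).

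\emph{Construction of $A$ via a macro chain on $\R$.} The key step is to build a critical family on $\R$ whose infinitesimal generator is the desired matrix $A$. Starting from $\om\in R_i$, leaving $R_i$ requires a transition of exponent $\geq 1$, which occurs on the slow scale $1/\la$; once the chain exits at some $\om'\in \Om$, it is pulled by the fast dynamics into a new class $R_j$ with asymptotic probability $\mu(\om',R_j)$. This leads me to define, for $i\neq j$,
\[
\overline Q_\la(R_i,R_j)=\sum_{\om\in R_i}\pi^{R_i}(\om)\sum_{\om'\in\Om}Q_\la(\om,\om')\,\ind_{\{e_{\om,\om'}\geq 1\}}\,\mu(\om',R_j),
\]
with the diagonal completing each row to $1$. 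By Assumption~1, $\overline Q_\la$ is a critical family on $\R$ whose generator, in the sense of \eqref{aaa}, is the desired $A:\R\times\R\to\RR$.

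\emph{Two-timescale conclusion.} On the fast scale $\la^{-\de}$ with $0<\de<1$, the $Q_\la$-chain started at $\om$ reaches class $R_i$ with asymptotic probability $\mu(\om,R_i)$, the transient mass vanishing. Within each $R_i$ the chain equilibrates to $\pi^{R_i}$ on the same fast scale (after averaging over $N$), while transitions between classes occur on the scale $1/\la$ at rates dictated by $\overline Q_\la$. Applying Corollary~\ref{crit} to the critical family $\overline Q_\la$ yields that, conditional on entering $R_i$, the probability of being in $R_j$ at time $t$ converges to $(\e^{At})(R_i,R_j)$. Concatenating the three stages,
\[
\overline P_t(\om,\om')=\sum_{i,j}\mu(\om,R_i)(\e^{At})(R_i,R_j)M(R_j,\om')=(\mu\,\e^{At}M)(\om,\om').
\]
The main obstacle is the rigorous justification of this two-timescale separation: one must show that the within-class distribution is uniformly close to $\pi^{R_i}$ over time windows of order $1/\la$ despite excursions through transient states, and that slow re-entries do not distort the identification of $A$. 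This is expected to rely on Lemma~\ref{tec} to relate the invariant measures of $Q_\la$ and $\widehat Q_\la$ restricted to $R_i$, together with averaging over the period $N$ to neutralize periodic components of the fast dynamics, as already illustrated by Example~\ref{ex5}.
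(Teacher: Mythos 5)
Your two-timescale split --- fast meaning $e_{\om,\om'}<1$, critical meaning $e_{\om,\om'}\geq 1$ --- is not fine enough, and the set $\R$ you construct is the wrong one. The paper's $\R$ is \emph{not} the set of recurrence classes of $\widetilde{Q}_\la$ (which in the paper only serves to fix the averaging period $N$); it is the output $\R^{[k]}$ of an iterated aggregation in which the relevant exit order from an aggregate $R$ is the \emph{effective} order $e_{z,\om'}+e^{R}_z$, combining the raw exponent of the transition with the exponent of the ($\la$-dependent, possibly vanishing) invariant weight $\pi^{R}_\la(z)\sim c\,\la^{e^{R}_z}$ of the departing state --- see \eqref{defalk}. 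The example of Section~\ref{ill} shows exactly where your construction breaks: the cycle $v=\{1,2,3\}$ has a raw-exponent-$3/5<1$ exit $1\to 4$, so under your $\widetilde{Q}_\la$ the states $1,2,3$ are \emph{transient} and your $\R$ would be $\{\{4\},\{7,8\}\}$ with all of $v$'s mass pushed instantly into $\{4\}$. But state $1$ is visited with frequency $\pi^{v}_\la(1)\sim\frac{c}{a}\la^{2/5}$, so the effective exit rate from $v$ is $\pi^v_\la(1)Q_\la(1,4)\sim\frac{ce}{a}\la$ --- critical order --- and the chain in fact spends a sojourn of length $\Theta(1/\la)$, i.e.\ a positive fraction of the play, inside $v$. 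The correct $\R$ is $\{v,\{4\},u\}$ with $L=3$, and $P_t(3,\cdot)$ retains mass $\e^{-(\frac{c}{a}e+\frac{c}{b}f)t}$ near state $3$, which your decomposition cannot produce.

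The same defect propagates to your generator: the formula $\overline{Q}_\la(R_i,R_j)=\sum_\om\pi^{R_i}(\om)\sum_{\om'}Q_\la(\om,\om')\ind_{\{e_{\om,\om'}\geq 1\}}\mu(\om',R_j)$ uses the \emph{limiting} invariant measure and keeps only raw exponents $\geq 1$, so it assigns rate $0$ to exits such as $1\to4$ above (the indicator kills it since $e_{1,4}=3/5$, and $\pi^v(1)=0$ kills it again), whereas these are precisely the exits that populate $A$; compare \eqref{eqA} and \eqref{defAk}, where the $\la$-dependent $\pi^{[k],R}_\la$ is kept inside the limit $\frac1\la(\cdot)$. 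Fixing this forces you into the paper's inductive scheme: order the effective exit scales $\al_1<\al_2<\dots<\al_k$, aggregate recurrence classes level by level (Corollary~\ref{jaff} and \eqref{defQk}), and only stop when the aggregated chain is critical, with $M$ obtained as the limit of the \emph{product} of the per-level invariant measures rather than the invariant measure of a single fast chain. Your mixing and coupling concerns (Lemma~\ref{tec}, averaging over $N$) are the right technical ingredients, but they must be applied once per level, not once globally.
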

The proof of this result is constructive, and is left to Section~\ref{proof}, together with an algorithm
for the computation of $L$, $\R$, $\mu$, $A$ and $M$. 
An illustration of the algorithm is provided in Section~\ref{ill} by means of an example. 
Note that if $Q_\la$ were critical, then the
results in Section~\ref{lin} yield Theorem~\ref{main} with $L=|\Om|$, $\R=\Om$, $\mu=\Id=M$ and $A$ is defined in 
\eqref{aaa}. 

The following two results are direct consequences of Theorem~\ref{main}. 
\begin{corollaire}\label{cumul} For any $t>0$:
$$\limla \sum_{m=1}^{\lfloor t/\la \rfloor} \la(1-\la)^{m-1}Q_\la^{m-1}=\mu \left(\int_0^t \e^{-s}\e^{As}ds\right) M .$$
In particular,
$\limla \sum_{m\geq 1}\la(1-\la)^{m-1}Q_\la^{m-1}=\mu(\Id-A)^{-1}M.$
\end{corollaire}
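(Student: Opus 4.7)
The plan is to recognize the left-hand side as a Riemann sum converging to the integral on the right-hand side. Performing the change of variable $s = \la(m-1)$, so that the increment is $\la$ and $s$ ranges over a grid of $[0,t]$ as $m$ runs from $1$ to $\lfloor t/\la\rfloor$, the generic summand factorizes as
\[
\la\,(1-\la)^{m-1}\,Q_\la^{m-1} \;=\; \la \cdot (1-\la)^{s/\la}\cdot Q_\la^{s/\la}.
\]
Two convergence facts then do the work. First, $(1-\la)^{s/\la}\to \e^{-s}$ uniformly on $[0,t]$ as $\la\to 0$. Second, by Theorem~\ref{main} applied at the continuous time $s$, the matrix $Q_\la^{\lfloor s/\la\rfloor}$ converges (in the extended sense) to $P_s=\mu\,\e^{As}\,M$. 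Since both factors are bounded in $s\in[0,t]$ and $\la\in(0,1]$ (stochastic matrices have entries in $[0,1]$, and $\mu,M$ are sub-stochastic), dominated convergence for Riemann sums gives
\[
\limla\sum_{m=1}^{\lfloor t/\la\rfloor}\la(1-\la)^{m-1}Q_\la^{m-1}
\;=\;\int_0^t \e^{-s}\,\mu\,\e^{As}\,M\,ds
\;=\;\mu\left(\int_0^t\e^{-s}\e^{As}ds\right)M.
\]

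The main technical point is to address the periodicity issue hidden in Theorem~\ref{main}: the limit $P_s$ is obtained after averaging over $N$ consecutive powers of $Q_\la$, not from $Q_\la^{\lfloor s/\la\rfloor}$ itself. However, the weights $\la(1-\la)^{m-1}$ vary by a factor $1-\la\to 1$ between consecutive stages, so grouping $N$ consecutive terms of the sum gives
\[
\sum_{k=0}^{N-1}\la(1-\la)^{m-1+k}Q_\la^{m-1+k}
\;=\;N\la(1-\la)^{m-1}\!\left(\tfrac{1}{N}\sum_{k=0}^{N-1}Q_\la^{m-1+k}\right)(1+o(1)),
\]
and the averaged matrix converges precisely to $\overline{P}_s$ by Definition~\ref{extpos}. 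Hence the interchange of limit and summation is legitimate after this regrouping, and the argument above produces the claimed formula.

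For the ``in particular'' statement, I would let $t\to\infty$. The tail satisfies the uniform estimate
\[
\Bigl\|\sum_{m>\lfloor t/\la\rfloor}\la(1-\la)^{m-1}Q_\la^{m-1}\Bigr\|
\;\leq\;(1-\la)^{\lfloor t/\la\rfloor}\;\xrightarrow[\la\to 0]{}\;\e^{-t},
\]
which tends to $0$ as $t\to\infty$, uniformly in $\la$. On the right-hand side, since $A$ is the infinitesimal generator of a Markov process on $\R$, its eigenvalues have non-positive real parts (Gershgorin), so $\e^{(A-\Id)s}$ decays exponentially and $\Id-A$ is invertible. Consequently
\[
\int_0^\infty \e^{-s}\e^{As}\,ds=\int_0^\infty \e^{(A-\Id)s}\,ds=(\Id-A)^{-1},
\]
yielding $\limla\sum_{m\geq 1}\la(1-\la)^{m-1}Q_\la^{m-1}=\mu(\Id-A)^{-1}M$, as claimed.
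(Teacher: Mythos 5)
Your proof is correct and is exactly the argument the paper intends: the corollary is stated there as a ``direct consequence'' of Theorem~\ref{main} with no written proof, and the natural way to realize it is precisely your Riemann-sum computation, with the $N$-term regrouping to reconcile the sum with the extended position $\overline{P}_s$ of Definition~\ref{extpos}, and the uniform tail bound plus $\int_0^\infty \e^{(A-\Id)s}\,ds=(\Id-A)^{-1}$ (via Gershgorin, as the paper itself notes) for the ``in particular'' part. No gaps.
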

For any $t\in[0,1)$, let $p_t:=P_{-\ln(1-t)}$ be the position at the fraction $t$ of the game.
\begin{corollaire}\label{expr} Let $v_i$ be the eigenvalues of $A$ and let $m_i$ be the size of the Jordan box corresponding 
to $v_i$, in the canonical form of $A$. Then for any $t\in[0,1)$ and $\om\in \Om$, $p_t(\om)$ is linear in
$(1-t)^{-v_i}\ln(1-t)^{k}$, $0\leq k\leq m_i-1$.
\end{corollaire}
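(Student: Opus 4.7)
The plan is to translate Theorem~\ref{main} through the change of variables $s=-\ln(1-t)$ and then read off the structure of $p_t(\om)$ from the Jordan decomposition of $A$.

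By Theorem~\ref{main}, $P_s=\mu\,\e^{As}\,M$, so substituting $s=-\ln(1-t)$ gives $p_t=\mu\,\e^{-A\ln(1-t)}\,M$. The problem therefore reduces to analysing the entries of $\e^{As}$ as functions of $s$, and then specialising to $s=-\ln(1-t)$. I would write $A=PJP^{-1}$ in Jordan canonical form, with blocks $J_i=v_i\Id+N_i$ of size $m_i$ where $N_i$ is the standard nilpotent of order $m_i$. Then $\e^{As}=P\,\e^{Js}\,P^{-1}$ with
\begin{equation*}
\e^{J_i s}=\e^{v_i s}\sum_{k=0}^{m_i-1}\frac{s^k}{k!}N_i^k,
\end{equation*}
so each entry of $\e^{As}$ is a linear combination of the functions $\e^{v_i s}s^k$, $0\leq k\leq m_i-1$.

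Setting $s=-\ln(1-t)$ yields $\e^{v_i s}=(1-t)^{-v_i}$ and $s^k=(-1)^k\ln(1-t)^k$, so each entry of $\e^{-A\ln(1-t)}$ is a linear combination of the functions $(1-t)^{-v_i}\ln(1-t)^k$, $0\leq k\leq m_i-1$. Since $p_t(\om)=\sum_{\om',R,R'}\mu(\om,R)\e^{-A\ln(1-t)}(R,R')M(R',\om')$ depends linearly on the entries of $\e^{-A\ln(1-t)}$, the same representation holds for $p_t(\om)$.

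There is no real obstacle here; the statement is essentially a restatement of the standard Jordan-form expression for the matrix exponential after the change of variables. The only point requiring a moment of care is that the coefficients in the Jordan decomposition of $A$ mix the various blocks through the similarity $P$ and through $\mu$ and $M$, but since we only claim linearity over the family $\{(1-t)^{-v_i}\ln(1-t)^k\}$, this mixing is harmless.
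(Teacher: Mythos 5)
Your proof is correct and follows the same route as the paper, which states Corollary~\ref{expr} as a direct consequence of Theorem~\ref{main}: you simply make explicit the Jordan-form expansion of $\e^{As}$ and the substitution $s=-\ln(1-t)$ that the paper leaves implicit.
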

The analogue of Corollary~\ref{limit} holds here, yet with some slight modifications. Unlike in Section~\ref{lin}, it is not
the processes $X^\la$ which converge, but rather their restriction to the set $\R$ obtained in Theorem~\ref{main}.
The proof is then, word for word, as in Corollary~\ref{limit}.
\begin{definition} The restriction of $X^\la$ to $\R$ is: 
$$\Xh^\la_m:=\Phi(X^\la_{V^\la_m}), \quad  m\geq 1,$$ 
where $V^\la_m$ is the time of the $m$-th visit of $X^\la$ to $\R$ and $\Phi$ is a mapping which
associates, to any state $\om\in \bigcup_{\ell=1}^L R_\ell\subset \Om$, the subset $R_\ell$ which contains it. 
Let 
$\Xh^\la_t:=\Xh^\la_{\lfloor t \rfloor}$, $t\geq 0$. 
\end{definition}
\begin{corollaire}\label{limit} Let $\R$, $\mu$ and $A$ be given in Theorem~\ref{main}. 
The processes $\Xh^\la$ converge, as $\la$ tends to $0$, to a Markov process with initial distribution $\mu$ and generator $A$. 
\end{corollaire}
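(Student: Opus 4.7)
The strategy is to mimic the proof of the earlier Corollary~\ref{limit} from Section~\ref{lin} essentially verbatim, substituting $\widehat{X}^\la$ for $X^\la$ and invoking the structural ingredients produced by the constructive proof of Theorem~\ref{main}. Convergence will follow from two standard inputs: identification of the finite-dimensional distributions of any subsequential limit, and tightness in the Skorokhod space of c\`adl\`ag $\R$-valued paths.

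For the first input, I would establish an analogue of Proposition~\ref{lp} for the restricted process. Concretely, for any $R, R' \in \R$ with $R \neq R'$ and any $t \geq 0$, $h > 0$, one should have
\begin{align*}
\limla \PP(\widehat{X}^\la_{t+h} = R \mid \widehat{X}^\la_t = R) &= 1 + A(R, R)h + o(h),\\
\limla \PP(\widehat{X}^\la_{t+h} = R' \mid \widehat{X}^\la_t = R) &= A(R, R')h + o(h),
\end{align*}
so that $A$ is the infinitesimal generator of the limit. That the initial law of $\widehat{X}^\la_0$ converges to $\mu$ is then immediate from the very definition of $\mu$ as the entrance law from $\Om$ to the classes of $\R$, which is part of the decomposition used in the proof of Theorem~\ref{main}.

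For tightness, I would apply the same criterion for c\`adl\`ag processes with discrete (here: $\R$-valued) state space that is used in the critical-case corollary. The bound in the analogue of Proposition~\ref{lp}-$(ii)$ implies that, for fixed $T>0$, the probability of two changes of $\widehat{X}^\la$ falling inside a window of length $\epsilon$ anywhere in $[0,T]$ is $O(\epsilon)$, uniformly in $\la$. Combined with finite-dimensional convergence, this yields convergence in distribution to the unique Markov process with initial law $\mu$ and generator $A$.

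The main obstacle is the local transition estimate above. Unlike in the critical case, where it was a direct consequence of Assumption 1, here $\widehat{X}^\la$ is a nontrivial functional of $X^\la$: it records the sequence of recurrence classes visited, and these are traversed on a slow (rare-transition) time scale while $X^\la$ evolves fast inside each $R_\ell$. The required estimate therefore rests on the effective-dynamics decomposition carried out in the proof of Theorem~\ref{main}, where the sojourn structure within each $R_\ell$ is handled by the critical-case analysis of Section~\ref{lin}, and the off-diagonal entries of $A$ are identified with the (properly rescaled) rates of rare inter-class transitions.
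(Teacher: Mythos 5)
Your proposal matches the paper's own treatment: the paper states that the proof is ``word for word'' that of the critical-case corollary, with the local transition estimates $\limla \PP^t_\om(X^\la_{(t+h)/\la}\in R')=A(R,R')h+o(h)$ supplied by Claim~\ref{rr'} (established in the proof of Proposition~\ref{lp2} and its inductive extension in Theorem~\ref{main}), the initial law $\mu$ coming from the entrance probabilities, and tightness from the same c\`adl\`ag discrete-state criterion. The ``main obstacle'' you flag is exactly the ingredient the paper has already proven en route to Theorem~\ref{main}, so your deferral to that machinery is the intended argument.
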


\subsection{The order of a transition}
A natural way to rank the transitions of the Markov chains 
$(X_m^\la)_m$ is in terms of their (asymptotic) order of magnitude. For that prurpose, it is useful 
to define the following notion. 
\begin{definition} The \emph{order of the transition} from $\om$ to $\om'$
is defined as follows: 
$$r_{\om,\om'}:=\inf\left \{\al \geq 0 \, \bigg| \, \liminf_{\la\to 0} \PP^\la_\om\left(\tau^\la_{\om'}\leq \frac{1}{\la^\al}\right)>0\right\}.$$
\end{definition}
Let us present an example to illustrate this definition. 
\begin{example}\label{ex6} Let $0\leq a<b$, $\Om=\{1,\dots,n\}$ and suppose that
$Q_\la(1,2)=\la^a$, $Q_\la(1,3)=\la^b$ and $Q_\la(1,k)=0$ for all $k=3,\dots,n$. 
On the one hand, for any $\de<a$, 
$\PP^\la_1(\tau^\la_2>1/\la^\de)\geq (1-\la^a-\la^b)^{1/\la^\de}$. Taking the limit yields:
$$\limla \PP^\la_{1}(\tau^\la_2\leq 1/\la^\de)\leq 1- \limla (1-\la^a-\la^b)^{1/\la^\de}=0,$$
which implies that $r_{1,2}\geq a$. 
On the other hand, starting from state $1$, for any $m$:
$$\{\tau^\la_{1^c}\leq m\} \cap \{X^\la_{\tau^\la_{1^c}}=2\}\subset \{\tau^\la_2\leq m\}.$$
Taking the limit yields $r_{1,2}\leq a$ since:
$$\limla \PP^\la_{1}(\tau^\la_1\leq 1/\la^a)\geq \limla  \left(1-(1-\la^a-\la^{b})^{1/\la^a}\right)\frac{\la^a}{\la^a+\la^{b}}=1-1/e>0.$$
\end{example}
The previous example exhibits an explicit computation for the order of a transition. Note, however, that $r_{1,3}$ cannot be computed with the data we provided, for it depends on other entries of $Q_\la$. This is due to the fact that, conditional to leaving state $1$, the probability of going to state $2$ converges to $1$, so that the future behaviour of the chain depends on the vector $Q_\la(2,\cdot)$.
Let us give an example where the computation of the order of a transition is a bit more involved.
\begin{example}\label{ex} Let $0\leq a<b$, $c\geq 0$, and $\Om=\{1,2,3\}$. For any $\la\in[0,1]$, let:
$$Q_\la=\begin{pmatrix} 1-(\la^a+\la^b) & \la^a & \la^{b}\\
\la^{c}  & 1-\la^c & 0 \\  
 0& 0 & 1
        \end{pmatrix}.$$
The computation of $r_{1,2}=a$ and of $r_{2,1}=c$ is the same as in the previous example. Also, 
$r_{3,2}=r_{3,1}=\infty$ because $3$ is absorbing. Let us compute $r_{1,3}$ and $r_{2,3}$ heuristically.
In average, state $1$ is left after $1/(\la^a+\la^b)$ stages, then state $2$ is left after $1/\la^{c}$ stages, and we are back in state $1$ again. 
Hence, in average, an exit from state $1$ occurs every $\frac{1}{\la^a+\la^b}+\frac{1}{\la^c}$ stages.
Consequently, state $1$ is left $1/\la^b$ times, after:
$$\frac{\frac{1}{\la^a+\la^b}+\frac{1}{\la^c}}{\la^b}$$ 
stages, and thus the probability of reaching state $3$ is strictly positive. 
The relation $\frac{1}{\la^a+\la^b}+\frac{1}{\la^c}\sim_{\la\to 0} \frac{1}{\la^{\max\{a,c\}}}$, which holds because $a<b$, yields 
$r_{1,3}=r_{2,3}=\max\{a,c\}-b$. 
\end{example}
\subsection{Fastest and secondary transitions}\label{fast}
\begin{definition}
Let  $\al_1:=\min \{ e_{\om,\om'} \, | \,\om\neq \om'\in \Om\}$ be the \emph{order of the fastest transitions}.
A transition from $\om$ to $\om'$ is \emph{primary} if $e_{\om,\om'}=\al_1$. 
\end{definition} 
Note that if $\al_1\geq 1$, $Q_\la$ is critical. The results in Section~\ref{lin} apply and yield (see Corollary~\ref{crit}) 
Theorem~\ref{main} with $L=|\Om|$, $\R=\Om$, $\mu=M=\Id$ and $A$ defined in 
\eqref{aaa}. Suppose, on the contrary, that $\al_1<1$. Define a stochastic matrix $P^{[1]}_\la$ which is the restriction of $Q_\la$ to its fastest transitions. 
For any $\om\neq \om' \in \Om$ set: 
\begin{equation}
P^{[1]}_\la(\om,\om'):=\begin{cases} c_{\om,\om'}\la^{e_{\om,\om'}}, & \text{if } e_{\om,\om'}=\al_1;\\
0 & \text{otherwise};
\end{cases}
\end{equation}
and let $P^{[1]}_\la(\om,\om):=1-\sum_{\om'\neq \om}P^{[1]}_\la(\om,\om')$. 
Let $\R^{[1]}$ and $\T^{[1]}$ be, respectively, the set of its recurrence classes and transient states. Note that these sets are independent of $\la>0$. Let $\pi_\la^{[1],R}$ be the invariant measures of the restriction of $P^{[1]}_\la$ to the recurrence class $R\in \R^{[1]}$.
We can now define secondary transitions.
\begin{definition} The order of secondary transitions is:  
\begin{equation}\label{al2}\al_2:=\min \{e_{\om,\om'} \, | \, \om\in R, \, \om' \notin R, \ R\in \R^{[1]}\}.
\end{equation}
A transition from $\om$ to $\om'$ is secondary if $e_{\om,\om'}\geq \al_2$, and $\om\in R$, $\om'\notin R$, for some $R\in \
R^{[1]}$. 
\end{definition}
The definition of $P_\la^{[1]}$ implies that $\widehat{P^{[1]}_\la}$ is independent of $\la$ and has the same recurrence classes as $P_\la^{[1]}$. Denote this matrix by $\Ph^{[1]}$ and let $\pih^{[1],R}$ be the invariant measures of the restriction of $\Ph^{[1]}$ to $R$. 
The restriction of $P^{[1]}_\la$ to $R$ is irreducible and, consequently, we may apply Lemma~\ref{tec} with
the diagonal matrix $S^{[1],R}_\la$, defined for each $\om\in R$ as follows:
$$S^{[1],R}_\la(\om,\om):=P^{[1]}_\la(\om,\om^c).$$
Note that either $R=\{\om\}$ is a singleton and $S^{[1],R}_\la(\om,\om)=1$ or 
there are at least two states in $R$ and $S^{[1],R}_\la(\om,\om):=c_{\om}\la^{\al_1}$ for each $\om\in R$. The following result is thus a direct consequence of Lemma~\ref{tec}.
\begin{corollaire}\label{jaff}
Let $R\in \R$. 
Then there exist ${c}^{[1],R}(\om)>0$ 
($\om\in R$)
such that: 
$$\displaystyle \pi^{[1],R}_\la(\om)= \frac{\pih^{[1],R}(\om)/S^{[1],R}_\la(\om,\om)}
{\displaystyle \sum_{\om'\in R}\pih^{[1],R}(\om')/S^{[1],R}_\la(\om',\om')}= {c}^{[1],R}(\om).$$ 
\end{corollaire}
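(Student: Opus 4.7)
The plan is to directly apply Lemma~\ref{tec} to the restriction of $P^{[1]}_\la$ to each recurrence class $R \in \R^{[1]}$ and then exploit the fact that, within such a class, the diagonal of $S^{[1],R}_\la$ factorizes as a \emph{common} power of $\la$, which then cancels between numerator and denominator. Since the recurrence classes of $P^{[1]}_\la$ and of $\Ph^{[1]}$ coincide and do not depend on $\la$, the invariant measures $\pih^{[1],R}$ are well-defined $\la$-free objects, and we are reduced to a purely algebraic verification.

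First I would check that the hypotheses of Lemma~\ref{tec} are met on $R$ (with $|R|\geq 2$). Irreducibility of $P^{[1]}_\la|_R$ is by definition of a recurrence class, and the defining identity $P^{[1]}_\la|_R = \Id - S^{[1],R}_\la + S^{[1],R}_\la\,\widehat{P^{[1]}_\la}|_R$ is immediate entry-by-entry: off the diagonal, $S^{[1],R}_\la(\om,\om)\widehat{P^{[1]}_\la}(\om,\om')=P^{[1]}_\la(\om,\om^c)\cdot P^{[1]}_\la(\om,\om')/P^{[1]}_\la(\om,\om^c)=P^{[1]}_\la(\om,\om')$, while on the diagonal $\widehat{P^{[1]}_\la}(\om,\om)=0$ yields $1-S^{[1],R}_\la(\om,\om)=1-P^{[1]}_\la(\om,\om^c)=P^{[1]}_\la(\om,\om)$. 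The diagonal entries of $S^{[1],R}_\la$ lie in $(0,1]$ because $R$ is a recurrence class of $P^{[1]}_\la$ containing at least two states, so $P^{[1]}_\la(\om,\om^c)>0$ for each $\om\in R$. Lemma~\ref{tec} then gives the first equality of the statement.

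The second equality is where the specific form of $P^{[1]}_\la$ enters. By construction of $P^{[1]}_\la$, every transition out of a state in $R$ is primary, so $S^{[1],R}_\la(\om,\om)=P^{[1]}_\la(\om,\om^c)=c_\om\la^{\al_1}$ for each $\om\in R$. Substituting this into the ratio, the factor $\la^{\al_1}$ appears in every term of the numerator and the denominator and cancels, leaving
\[
c^{[1],R}(\om) := \frac{\pih^{[1],R}(\om)/c_\om}{\sum_{\om'\in R}\pih^{[1],R}(\om')/c_{\om'}},
\]
which is positive and independent of $\la$. The singleton case $R=\{\om\}$ is handled separately and trivially: then $\pi^{[1],R}_\la(\om)=1$ for every $\la$, consistent with the convention $S^{[1],R}_\la(\om,\om)=1$ stated in the excerpt.

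There is no real obstacle here; the only point to be careful about is the singleton case, where the hypothesis of Lemma~\ref{tec} that the diagonal entries of $S$ lie in $(0,1]$ would fail if one took $S^{[1],R}_\la(\om,\om)=P^{[1]}_\la(\om,\om^c)=0$. The convention $S^{[1],R}_\la(\om,\om)=1$ precisely avoids this degeneracy and makes the stated formula valid in all cases.
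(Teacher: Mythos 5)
Your proposal is correct and follows the same route as the paper: the paper simply applies Lemma~\ref{tec} to the restriction of $P^{[1]}_\la$ to $R$ with $S^{[1],R}_\la(\om,\om)=P^{[1]}_\la(\om,\om^c)=c_\om\la^{\al_1}$ (or $1$ in the singleton case) and lets the common factor $\la^{\al_1}$ cancel, exactly as you do. You merely spell out the entry-by-entry verification of $P=\Id-S+S\Ph$ that the paper leaves implicit.
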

Since $\pi^{[1],R}_\la$ is independent of $\la$, we will denote it from now on simply by $\pi^{[1],R}$.
Conditional on having no transitions of order higher than $\al_1$ and on being in $R$, the frequency of visits to $\om\in R$ converges (exponentially fast) to $\pi^{[1],R}(\om)$. 
Consequently, the probability of a transition of higher order
going out from $R$ converges to $\sum_{\om\in R}\pi^{[1],R}(\om) Q_\la(\om,\cdot)$.
Aggregation is thus natural, in order to study phenomena of order strictly bigger than $\al_1$.

\subsection{Aggregating the recurrence classes}\label{aggr}
Aggregating the reccurrence classes stand to considering the state space $\Om^{[1]}:=\T^{[1]}
\cup \R^{[1]}$, i.e. an element $\om^{[1]}\in \Om^{[1]}$ is either a transient state $\om^{[1]}\in \T^{[1]}$ (in this case $\om^{[1]}=\om \in \Om$) or a recurrence class $\om^{[1]}= R\in \R^{[1]}$ (in this case $\om^{[1]} \subset \Om$).
In particular, the states of $\Om^{[1]}$ can be seen as a partition of the states of 
$\Om^{[0]}:=\Om$ (see Figure $3$ for an illustration). 
To avoid cumbersome notation, let $\om,\om'$ stand for states in $\Om^{[1]}$ when there is no confusion. One can then define an ``aggregated'' stochastic matrix $Q_\la^{[1]}$ over $\Om^{[1]}$ as follows.
\begin{equation}\label{q1}
Q^{[1]}_\la(\om,\om') :=\begin{cases} Q_\la(\om,\om') & \text{ if } \om,\om' \in \T^{[1]};\\
                 \sum_{z'\in \om'} Q_\la(\om,z') & \text{ if } \om \in \T^{[1]}, \text{ and } \om'\in \R^{[1]};\\
                 \sum_{z \in \om} \pi^{[1],\om}(z)Q_\la(z,\om') & \text{ if } \om \in \R^{[1]}, \text{ and } \om'\in \T^{[1]};\\
                 \sum_{z \in \om,\, z'\in \om'} \pi^{[1],\om}(z) Q_\la(z,z') & \text{ if } \om, \om' \in \R^{[1]}.
                \end{cases}
\end{equation}
Clearly, Assumption $1$ ensures the existence of $c^{[1]}_{\om,\om'}$ and $e^{[1]}_{\om,\om'}$ such that
$$Q^{[1]}_\la(\om,\om')\sim_{\la\to 0} 
c^{[1]}_{\om,\om'}\la^{e^{[1]}_{\om,\om'}}, \quad \forall \om,\om'\in \Om^{[1]}.$$
An explicit computation of $c^{[1]}_{\om,\om'}$ and $e^{[1]}_{\om,\om'}$ can be easily deduced from \eqref{q1}, 
in terms of the coefficients and exponents of $(Q_\la)_\la$ and of the invariant measures of $P^{[1]}_\la$.
The matrix $Q^{[1]}_\la$ arises by aggregating the state in the recurrence classes of $P^{[1]}_\la$. 
Define the entrance laws $\mu^{[1]}:\Om\to \De(\R^{[1]})$ as follows:
\begin{equation}\label{entr}\mu^{[1]}(\om,R):=\lim_{n\to \infty}(P^{[1]}_\la)^n(\om,R).
\end{equation}

If $\al_2\geq 1$, define $A^{[1]}:\R^{[1]}\times \R^{[1]}\to [0,\infty)$ 
the infinitesimal generator corresponding to $Q^{[1]}_\la$, as follows. 
For any $R,R'\in \R^{[1]}$: 
\begin{eqnarray}\label{defA1}A^{[1]}(R,R')
&:=& \limla \frac{1}{\la}\left(\sum_{\om \in \Om^{[1]}\backslash R} Q^{[1]}_\la(R,\om)\mu^{[1]}(\om,R')\right), 
\end{eqnarray}
and $A^{[1]}(R,R)=-\sum_{R'\neq R} A^{[1]}(R,R')$. Note that $A^{[1]}$ admits the following useful equivalent expression:
\begin{eqnarray}\label{eqA}A^{[1]}(R,R')
&=& \limla \frac{1}{\la}\left(\sum_{\om\in R,\, \om'\notin R}\pi^{[1],R}(\om)Q_\la(\om,\om')\mu^{[1]}(\om',R')\right).
\end{eqnarray}
Finally, let $M^{[1]}:\R^{[1]}\to \De(\Om)$ 
be such that, for any $\om\in R\in \R$:
\begin{equation}\label{m1} M^{[1]}(R,\om):=\pi^{[1],R}(\om). 
\end{equation}
\subsection{One-step dynamics}
The following intermediary step will be very useful in proving Theorem~\ref{main}. Assume in this section that
$\al_2\geq 1$. 
\begin{remarque} Notice that $\al_2>1$ if and only if $A^{[1]}=0$.
\end{remarque}

\begin{proposition}\label{lp2} If $\al_2\geq 1$, 
then $P_t= \mu^{[1]} \e^{A^{[1]}t}M^{[1]}, \ \forall t\geq 0.$
\end{proposition}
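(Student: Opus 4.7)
The plan is a two-time-scale analysis. I would fix an auxiliary exponent $\de\in(\al_1,1)$ and set $T_\la:=\lfloor 1/\la^\de\rfloor$, so that $\la T_\la\to 0$ while $\la^{\al_1}T_\la\to\infty$. Then I would split the horizon $\lfloor t/\la\rfloor$ into a short initial interval of length $T_\la$, during which $X^\la$ gets absorbed into and equilibrates inside some recurrence class of $P_\la^{[1]}$, and a long interval of length $s_\la:=\lfloor t/\la\rfloor-T_\la$, during which the class label evolves at the slow time scale according to the aggregated chain. Since $\la s_\la\to t$, the time $s_\la$ still corresponds to time $t$ in the asymptotics by the remark in \eqref{remgen}.

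For the short interval, the expected number of secondary transitions is at most $T_\la\cdot O(\la^{\al_2})=O(\la^{\al_2-\de})\to 0$, because $\al_2\ge 1>\de$. Hence, on $[0,T_\la]$ the chain $X^\la$ coincides with probability $1-o(1)$ with one driven by $P_\la^{[1]}$. Moreover, the restriction of $P_\la^{[1]}$ to any recurrence class $R$ has spectral gap of order $\la^{\al_1}$, and $T_\la\la^{\al_1}\to\infty$; so, after averaging over $N$ consecutive instants to kill the residual periodicity of $\Ph^{[1]}$, the conditional distribution at time $T_\la$ converges exponentially fast to $\pi^{[1],R}$. Combining these two observations, I expect to obtain
\[
\limla\frac1N\sum_{k=0}^{N-1}Q_\la^{T_\la+k}(\om,\om'')=\sum_{R\in\R^{[1]}}\mu^{[1]}(\om,R)\,\pi^{[1],R}(\om'')\,\ind_{\{\om''\in R\}}.
\]

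For the long interval, since $\al_2\ge 1$, the aggregated chain $Q_\la^{[1]}$ restricted to $\R^{[1]}$ has all off-diagonal entries of order at least $\la$ and is critical in the sense of Section~\ref{lin}, with infinitesimal generator $A^{[1]}$. Corollary~\ref{crit} applied to this restriction therefore yields $\limla(Q_\la^{[1]})^{s_\la}(R,R')=\e^{A^{[1]}t}(R,R')$. Inserting a symmetric equilibrating interval of length $T_\la$ just before the terminal time and using the Markov property, I should then conclude
\[
P_t(\om,\om')=\sum_{R,R'\in\R^{[1]}}\mu^{[1]}(\om,R)\,\e^{A^{[1]}t}(R,R')\,\pi^{[1],R'}(\om')\,\ind_{\{\om'\in R'\}}=(\mu^{[1]}\e^{A^{[1]}t}M^{[1]})(\om,\om').
\]

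The hard part will be making rigorous the claim that the projection of $X^\la$ onto $\R^{[1]}$ evolves asymptotically like $Q_\la^{[1]}$. To handle this I would iterate the two-time-scale split: partition $[T_\la,s_\la]$ into successive blocks of length $T_\la$, observe that in each block at most one secondary transition occurs with probability $1-o(\la T_\la)$, and that between consecutive blocks the chain re-equilibrates inside its current class because $\la^{\al_1}\gg\la$. Each block then contributes one step of $Q_\la^{[1]}$ up to $o(\la T_\la)$, and the resulting telescoping product has the required limit $\e^{A^{[1]}t}$ by Corollary~\ref{crit}.
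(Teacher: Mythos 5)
Your plan is built from the same ingredients as the paper's proof: the auxiliary exponent $\de\in(\al_1,1)$ and burn-in windows of length $\la^{-\de}$, the bound $O(\la^{\al_2-\de})$ on the probability of a secondary transition during such a window, the spectral-gap coupling to $\pi^{[1],R}$ inside a recurrence class (Claim~\ref{tech}), the entrance laws $\mu^{[1]}$ for the initial absorption, and the averaging over $N$ to neutralize periodicity. The one genuine difference is how the factor $\e^{A^{[1]}t}$ is produced. The paper works on a macroscopic window $[t,t+h]$: it shows that at most one secondary transition occurs there with probability $1-o(h)$ and identifies the generator through $\limla\PP^t_\om(X^\la_{(t+h)/\la}\in R')=A^{[1]}(R,R')h+o(h)$ (Claim~\ref{rr'}), mimicking Proposition~\ref{lp}. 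You instead run an Euler-type product over microscopic blocks of length $T_\la=\la^{-\de}$. Both routes work; the paper's keeps all the coupling estimates inside a single window and reuses the critical-case argument almost verbatim, while yours makes explicit the convergence of the induced class process, which the paper only records afterwards in the corollary about $\Xh^\la$.

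Two points in your block step must be repaired before the argument closes. First, a block of length $T_\la$ does not contribute ``one step of $Q^{[1]}_\la$'': its class-to-class transition matrix is $\Id+\la T_\la A^{[1]}+o(\la T_\la)$, i.e.\ roughly $T_\la$ steps of the aggregated chain. Taken literally, one step per block over the $t\la^{\de-1}$ blocks would give $(Q^{[1]}_\la)^{t\la^{\de-1}}\to\Id$, not $\e^{A^{[1]}t}$; your error term $o(\la T_\la)$ suggests you intend the former, but it has to be stated. Second, the ``restriction of $Q^{[1]}_\la$ to $\R^{[1]}$'' is not a stochastic matrix when $\T^{[1]}\neq\emptyset$: a secondary jump out of $R$ may land in a transient state $\om'\in\T^{[1]}$, from which the chain is re-absorbed into some class $R'$ with probability $\mu^{[1]}(\om',R')$ within $O(\la^{-\al_1})\ll T_\la$ further steps. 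This is exactly why $A^{[1]}$ is defined by \eqref{defA1}, with the weight $\mu^{[1]}(\om,R')$, rather than as the naive $\R^{[1]}\times\R^{[1]}$ block of $\frac{1}{\la}(Q^{[1]}_\la-\Id)$; so Corollary~\ref{crit} cannot be invoked for that restriction as such, but your block decomposition does deliver the correct weighted generator once the re-absorption step is inserted (this is the role of \eqref{rff} in the paper's proof).
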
 
Before getting into the proof, let us notice the following flexibility of the notion ``the position at time $t$''.
\begin{remarque}\label{flex} As in \eqref{remgen}, we will actually prove a slightly stronger statement: 
for any $R\in\R^{[1]}$, $t\geq 0$, $\om \in \Om$ and 
$\de'$ satisfying
$0\leq \al_1<\de'<1\leq \al_2$:
$$P_t(\om,R)=\limla \PP^\la_\om\left(X^\la_{T^\la_t\pm 1/\la^{\de'}}\in R \right)= \mu^{[1]} \e^{A^{[1]}t}(\om,R).$$
\end{remarque}

\noindent \emph{{Proof of Proposition~\ref{lp2}}}
Let $\de\in (\al_1,1)$, $t,h>0$ and $R,R'\in \R^{[1]}$ be fixed. 
The idea of the proof is similar to that of Proposition~\ref{lp}. One needs, however, to consider two more deterministic times, and take into account periodicity issues. 
Introduce some notation. \\
\textbf{Notation:} For any $h>0$, define four deterministic times (see Figure $2$):
$$T^\la_0:=\frac{t}{\la}, \quad T^\la_{\de}:= \frac{t}{\la}+\frac{1}{\la^\de}, \quad T^\la_{h-\de}:= \frac{t+h}{\la}-
\frac{1}{\la^\de}, \quad T^\la_h:=\frac{t+h}{\la}.$$
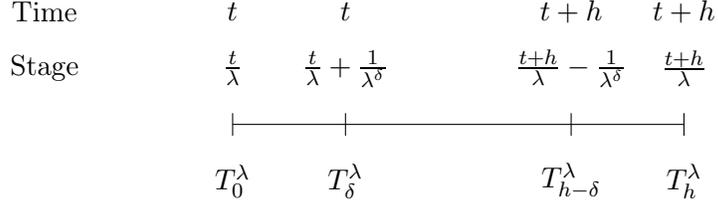
\begin{figure}\label{fig2}
\begin{center}
\begin{tikzpicture}
\draw (0,0) to (6,0);
\draw (0,-0.15) to (0,0.15);
\draw (1.5,-0.15) to (1.5,0.15);
\draw (4.5,-0.15) to (4.5,0.15);
\draw (6,-0.15) to (6,0.15);
\node   at (-2.5,0.75) {Stage};
\node   at (-2.5,1.5) {Time};
\node [scale=1] at (0,-0.75) {$T^\la_0$};
\node [scale=1]at (1.5,-0.75) {$T^\la_\de$};
\node [scale=1] at (4.5,-0.75) {$T^\la_{h-\de}$};
\node [scale=1] at (6,-0.75) {$T^\la_h$};

\node [scale=1]  at (0,0.75) {$ \frac{t}{\la}$};
\node [scale=1] at (1.5,0.75) {$ \frac{t}{\la}+\frac{1}{\la^\de}$};
\node [scale=1] at (4.5,0.75) {$ \frac{t+h}{\la}-\frac{1}{\la^\de}$};
\node [scale=1] at (6,0.75) {$ \frac{t+h}{\la} $};

\node [scale=1]  at (0,1.5) {$t$};
\node [scale=1]  at (1.5,1.5) {$t$};
\node [scale=1]  at (4.5,1.5) {$t+h$};
\node [scale=1]  at (6,1.5) {$t+h$};

\end{tikzpicture}

\end{center}
\caption{The four deterministic times, for fixed $t,h\geq 0$.}
\end{figure}
For any $k\in \NN$, and $\al,\be \in \{0,\de,h-\de,h\}$, denote by $F^\la_{\al,\be}(k)$
the event that $k$ secondary transitions 
of the Markov chain $X^\la$ 
occur in the interval $[T^\la_\al, T^\la_\be]$. 
Let $F^\la_{\al, \be}(k^+):=\bigcup_{\ell\geq k}F^\la_{\al,\be}(\ell)$. 
be the event corresponding to at least $k$ secondary transitions.
For any $k_1,k_2,k_3 \in \NN$, let $F^\la_{0,h}(k_1,k_2,k_3):= F^\la_{0,\de}(k_1)\cap F^\la_{\de,h-\de}(k_2)
\cap F^\la_{h-\de,h}(k_3)$. 
On the one hand, conditional to $X_{t/\la}^\la\in R$, since there is at least one secondary in order to leave 
the class $R$: 
$$\{X^\la_{(t+h)/\la }\in R'\}=\{X^\la_{(t+h)/\la }\in R'\}\cap F^\la_{0,h}(1^+).$$  
Moreover, the following disjoint union holds:
\begin{equation}\label{disj}F^\la_{0,h}(1^+)=F^\la_{0,h}(2^+) \cup F^\la_{0,h}(1,0,0)\cup F^\la_{0,h}(0,1,0)\cup F^\la_{0,h}(0,0,1). 
\end{equation}

\begin{claim}\label{easy} For any $\om\in \Om$ and $R\in \R^{[1]}$ one has, as $\la$ and $h$ tend to $0$:
\begin{itemize}
 \item[$(i)$] $\limla \PP^t_\om(F^\la_{0,h}(1^+))=O(h)$ and $\limla \PP^t_\om(F^\la_{0,h}(2^+))=o(h).$
 \item[$(ii)$] $\PP^t_\om(F^\la_{0,\de}(1^+))$, $\PP_\om^t(F^\la_{0,h}(1,0,0))$ and $\PP_\om^t(F^\la_{0,h}(0,0,1))$ are $O(\la^{\al_2-\de})$. 
\item [$(iii)$] $\limla \PP^t_{\om}(X^\la_{T^\la_{\de}}\in R)=\mu^{[1]}(\om,R)$. 
 \end{itemize}
\end{claim}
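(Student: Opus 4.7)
The general strategy is that a secondary transition has probability of order $\la^{\al_2}$ per stage, so counting how many such transitions can fit into a window gives $(i)$ and $(ii)$ by elementary Markov/union bounds; statement $(iii)$ is then a mixing-time argument for $P_\la^{[1]}$, using that the window of length $1/\la^\de$ is long compared to its mixing time (of order $\la^{-\al_1}$) but short compared to the typical waiting time for a secondary transition (of order $\la^{-\al_2}$).

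\textbf{Step 1 (single-stage bound).} By the definition of $\al_2$, for every $\om\in\Om$ the total probability of a secondary transition out of $\om$ in one step is bounded above by some $C\la^{\al_2}$, uniformly in $\om$ and in small $\la$. By the Markov property, this bound applies at each stage regardless of the past.

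\textbf{Step 2 (proof of $(i)$ and $(ii)$).} A union bound over the $\lfloor h/\la\rfloor$ stages of $[T^\la_0,T^\la_h]$ yields
$$\PP^t_\om(F^\la_{0,h}(1^+))\ \leq\ \frac{h}{\la}\cdot C\la^{\al_2}\ =\ O(h\la^{\al_2-1})\ =\ O(h),$$
because $\al_2\geq 1$, which gives the first half of $(i)$. Iterating via the strong Markov property applied at the first secondary transition, the probability of at least two secondary transitions in $[T^\la_0,T^\la_h]$ is bounded by $(O(h))^2=o(h)$, giving the second half of $(i)$. The same union bound applied to the shorter window $[T^\la_0,T^\la_\de]$ of length $1/\la^\de$ gives
$$\PP^t_\om(F^\la_{0,\de}(1^+))\ \leq\ \la^{-\de}\cdot C\la^{\al_2}\ =\ O(\la^{\al_2-\de}),$$
and the same reasoning applies verbatim to the last window $[T^\la_{h-\de},T^\la_h]$, while $\PP^t_\om(F^\la_{0,h}(1,0,0))\leq \PP^t_\om(F^\la_{0,\de}(1^+))$, yielding $(ii)$.

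\textbf{Step 3 (proof of $(iii)$).} On the complementary event, which by $(ii)$ has probability $1-O(\la^{\al_2-\de})\to 1$, no secondary transition occurs in $[T^\la_0,T^\la_\de]$; conditional on this, the chain $(X^\la_m)_{m\in[T^\la_0,T^\la_\de]}$ evolves exactly according to the restricted kernel $P^{[1]}_\la$. The chain $P^{[1]}_\la$ has off-diagonal entries of order $\la^{\al_1}$, so its mixing time inside each recurrence class is of order $\la^{-\al_1}$; since $\de>\al_1$, the number of stages $1/\la^\de$ is much larger than this mixing time, and the distribution of $X^\la_{T^\la_\de}$ under $P^{[1]}_\la$ starting from $\om$ converges to the limiting law $\mu^{[1]}(\om,\cdot)$ on $\R^{[1]}$ (collapsing each class to itself). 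In particular, the probability of lying in a given recurrence class $R$ converges to $\mu^{[1]}(\om,R)$, which is insensitive to internal periodicity within $R$.

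\textbf{Expected obstacle.} The only delicate point is Step 3: one must argue that with high probability no secondary transition contaminates the window and that, on this good event, the mixing of $P^{[1]}_\la$ is genuinely exponential with rate $\la^{\al_1}$, so that $\la^{\al_1-\de}\to\infty$ translates into convergence of the entrance law. Steps 1 and 2 are essentially union bounds once the single-stage estimate of Step 1 is in hand.
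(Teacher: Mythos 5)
Your proposal is correct and follows essentially the same route as the paper: the same single-stage bound $C\la^{\al_2}$ on secondary transitions, the same window-counting for $(i)$ and $(ii)$ (your union bound $\frac{h}{\la}C\la^{\al_2}$ is interchangeable with the paper's $1-(1-C\la^{\al_2})^{h/\la}$ estimate, and the two-jump case is squared in the same way), and the same conditioning on $F^\la_{0,\de}(0)$ for $(iii)$. Your extra remark that the window length $1/\la^{\de}$ dominates the $1/\la^{\al_1}$ relaxation scale of $P^{[1]}_\la$ is a justification the paper only supplies later (in its coupling claim), and your identification of the conditioned dynamics with $P^{[1]}_\la$ carries the same harmless imprecision as the paper's own argument.
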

\noindent \textit{Proof of Claim~\ref{easy}.} $(i)$ Let $C\geq 0$ be such that the probability of a secondary transition from any state is smaller than $C\la^{\al_2}$. Then, the probability of having no secondary transition in $[T^\la_{0},T^\la_h]$ 
satisfies:
\begin{equation}\label{edr}\PP_\om^t\left(F^\la_{0,h}(0)\right)\geq (1-C\la^{\al_2})^{h/\la}\sim_{\la\to 0}\exp(-Ch\la^{\al_2-1}). 
\end{equation}
Taking the limit yields, due to $\al_2\geq 1$, that 
$\limla \PP_\om^t(F^\la_{0,h}(0))\geq 1-O(h)$, as $h$ tends to $0$.
But then $\limla \PP^t_\om(F^\la_{0,h}(1^+))=O(h)$ so that:
$$\limla \PP^t_\om(F^\la_{0,h}(2^+))\leq \limla \left( \max_{\om'\in \Om}P^t_{\om'}(F^\la_{0,h}(1^+))\right)^2 =o(h), \quad \text{as  }h\to 0.$$
$(ii)$ Clearly, $\PP_\om^t(F^\la_{0,h}(1,0,0))\leq \PP_\om^t(F^\la_{0,\de}(1))\leq 1-\PP_\om^t(F^\la_{0,\de}(0))=\PP^t_\om(F^\la_{0,\de}(1^+))$. As 
in \eqref{edr}, one has that: 
$$\PP_\om^t\left(F^\la_{0,\de}(0)\right)\geq (1-C\la^{\al_2})^{1/\la^\de}\sim_{\la\to 0}\exp(-C\la^{\al_2-\de})=1-O(\la^{\al_2-\de}).$$ 
Thus, $\PP_\om^t(F^\la_{0,\de}(0))\leq \PP^t_\om(F^\la_{0,\de}(1^+))=O(\la^{\al_2-\de})$. The proof for $\PP_\om^t(F^\la_{0,h}(0,0,1))$ is similar.\\
$(iii)$ 
If $\om\in R$, then, $\PP^t_{\om}(X^\la_{T^\la_{\de}}\in R)\geq \PP^t_{\om}(F^\la_{0,\de}(0))=1-O(\la^{\al_2-\de})$, where the last equality 
holds by $(ii)$. Suppose that $\om\notin R$. By $(ii)$,
$\limla \PP^t_\om (F^\la_{0,\de}(0))=1$, so that: 
$$ \limla \PP^t_{\om}(X^\la_{T^\la_\de}\in R)= \limla \PP^t_{\om}(X^\la_{T^\la_\de}\in R \, | \, F^\la_{0,\de}(0))=\mu^{[1]}(\om,R).$$ \hfill $\square$\\
The main consequences of Claim~\ref{easy} are that, combined with \eqref{disj}, it yields, as $h$ tends to $0$:
\begin{eqnarray}\label{imp}\limla \PP^t_\om(X^\la_{(t+h)/\la }\in R')&=&\limla \PP^t_\om\left(X^\la_{(t+h)/\la} \in R' \cap F^\la_{\de,h-\de}(0,1,0)\right)+o(h),\\
\label{imp2}\limla \PP^t_\om(X^\la_{(t+h)/\la }\in R)&=&1- \sum_{R'\neq R}\limla \PP^t_\om\left(X^\la_{(t+h)/\la} \in R'\right)+o(h). 
\end{eqnarray}
We will need the following coupling result which implies that, up to an error which vanishes with $\la$, 
the distribution at stage $T^\la_\de$ is $\pi^{[1],R}$.
\begin{claim}\label{tech}
If $R$ is aperiodic then, conditional to $\{X_{t/\la}\in \om\in R\}$ and $F^\la_{0,\de}(0)$, the distance in total variation between the distribution of $X^\la_{T^\la_\de}$ and 
$\pi^{[1],R}$ is $O(\la^\ep)$ 
as $\la$ tends to $0$.
\end{claim}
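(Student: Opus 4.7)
The key idea is that, conditional on $\{X^\la_{t/\la}=\om\}\cap F^\la_{0,\de}(0)$, the process $(X^\la_m)$ on $[T^\la_0, T^\la_\de]$ evolves inside $R$ according to an effective kernel $\tilde Q_\la|_R$ that is a small perturbation of $P^{[1]}_\la|_R$. Since this chain runs for $\lfloor 1/\la^\de \rfloor$ steps, which is much larger than the mixing time $\Theta(1/\la^{\al_1})$ of $P^{[1]}_\la|_R$ (recall $\al_1<\de<1\leq \al_2$), it equilibrates to very high precision.

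\textbf{Step 1 (effective kernel).} The event $F^\la_{0,\de}(0)$ coincides with $\{\tau^\la_{R^c}>T^\la_\de\}$, since by definition of $\al_2$ every transition from $R$ to $R^c$ is secondary. The subtle point is that $F^\la_{0,\de}(0)$ is a functional of the whole trajectory on $[T^\la_0, T^\la_\de]$, so conditioning on it reweights one-step transitions by the probability of staying in $R$ for the remaining steps. However, this remaining-stay probability is $1 - O(\la^{\al_2-\de})$ uniformly over starting points in $R$, so modulo a multiplicative factor $1+O(\la^{\al_2-\de})$ (absorbed in the final bound), the effective one-step kernel on $R$ is $\tilde Q_\la(\om_1,\om_2) = Q_\la(\om_1,\om_2)/Q_\la(\om_1,R)$ for $\om_1,\om_2 \in R$.

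\textbf{Step 2 (mixing).} Decompose $\tilde Q_\la|_R = \Id - S_\la + S_\la K_\la$ with $S_\la$ diagonal with entries $\Theta(\la^{\al_1})$, and $K_\la$ a stochastic matrix vanishing on the diagonal. Because $R$ is aperiodic, one checks that $K_\la = \Ph^{[1]}|_R + O(\la^\eta)$ for some $\eta>0$ (the gap between $\al_1$ and the next smallest exponent appearing in transitions within $R$), and $\Ph^{[1]}|_R$ is irreducible aperiodic. A standard perturbation-of-identity argument then yields that the non-Perron eigenvalues of $\tilde Q_\la|_R$ have modulus $1 - c\la^{\al_1}(1+o(1))$ for some $c>0$, while Lemma~\ref{tec} shows its invariant measure is $\pi^{[1],R}+O(\la^\eta)$. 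Hence, for $N = \lfloor 1/\la^\de \rfloor$ and any initial law $\nu_0$ on $R$, one obtains $\bigl\| \nu_0 \tilde Q_\la^N - \pi^{[1],R} \bigr\|_{TV} \leq C(1-c\la^{\al_1})^N + C\la^\eta \leq C\exp(-c\la^{\al_1-\de}) + C\la^\eta$.

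\textbf{Conclusion and main obstacle.} Since $\de>\al_1$, the exponential term is super-polynomially small in $\la$; combining with the $O(\la^{\al_2-\de})$ error from Step 1 yields the claimed bound with $\ep := \min(\eta, \al_2-\de)>0$. The main technical hurdle is the uniform spectral-gap estimate of order $\la^{\al_1}$ for $\tilde Q_\la|_R$; aperiodicity of $R$ enters precisely here, ruling out non-Perron eigenvalues of $\Ph^{[1]}|_R$ of modulus $1$, so that the perturbation expansion of $\Id+\la^{\al_1}(K_\la-\Id)$ produces a genuine gap of the stated order. A coupling argument between two copies of $\tilde Q_\la|_R$, one started at $\om$ and one started at $\pi^{[1],R}$, is an alternative route that sidesteps the spectral analysis but requires a careful quantitative treatment of the coalescence time.
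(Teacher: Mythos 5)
Your proof is correct and its engine is the same as the paper's: both arguments rest on the decomposition $\Id-P=S(\Id-\Ph)$ from Lemma~\ref{tec}, on the fact that aperiodicity of $R$ forces the non-Perron eigenvalues to have modulus $1-c\la^{\al_1}(1+o(1))$, and on the observation that $\la^{-\de}\gg\la^{-\al_1}$ iterations (because $\de>\al_1$) then equilibrate the chain up to an error of order $\exp(-c\la^{\al_1-\de})$. You do, however, treat carefully two points that the paper's proof passes over in silence. First, the paper analyses the matrix $P^{[1]}_\la$ directly, whereas the object in the claim is the chain $Q_\la$ conditioned on $F^\la_{0,\de}(0)$; your Step 1 bridges this via the Doob-transformed effective kernel at an $O(\la^{\al_2-\de})$ cost. (The cleanest way to see that the per-step multiplicative factors do not accumulate over the $\la^{-\de}$ steps is that the ratios of staying probabilities telescope along a trajectory, leaving only a single global factor $1+O(\la^{\al_2-\de})$; your phrasing ``absorbed in the final bound'' is a little quick here, but the conclusion is right.) Second, $P^{[1]}_\la$ retains only the primary transitions, while the true kernel inside $R$ may contain intra-$R$ transitions of exponent strictly between $\al_1$ and $\al_2$; your perturbation $K_\la=\Ph^{[1]}+O(\la^\eta)$ and the attendant $O(\la^\eta)$ shift of the invariant measure account for this. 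The price is that you obtain $O(\la^\ep)$ only for the specific exponent $\ep=\min(\eta,\al_2-\de)$, whereas the paper asserts the bound for every $\ep>0$; since the claim is only used in Claim~\ref{rr'} through a coupling error that merely needs to vanish at some polynomial rate, your weaker (and, for the actual conditioned chain, more accurate) bound is entirely sufficient.
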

\noindent \textit{Proof of Claim~\ref{tech}.}
Let $P_\la^{[1]}$
and $\Ph^{[1]}$ be the restrictions of $P_\la^{[1]}$ and $\Ph^{[1]}$ to $R$ respectively. Let 
$S^{[1]}$ be a diagonal matrix such that $S^{[1]}(\om,\om):=\frac{1}{\la^{\al_1}}P_\la^{[1]}(\om,\om^c)$, for all 
$\om\in R$. It does not depend on $\la$ and that, by Gershgorin Circle Theorem,
all its eigenvalues have nonnegative real part. By construction $\Id-P_\la^{[1]}=\la^{\al_1} S^{[1]}(\Id - \Ph^{[1]})$. 
Thus, $\rho$ is an eigenvalue of $S^{[1]}(\Id - \Ph^{[1]})$ if and only if 
$1-\rho\la^{\al_1}$ is an eigenvalue of $P_\la^{[1]}$.
By aperiodicity, $1$ is a simple eigenvalue of $P_\la^{[1]}$, so that the second largest eigenvalue is
$1-\rho \la^{\al_1}$ for some eigenvalue of $S^{[1]}(\Id - \Ph^{[1]})$, $\rho\neq 0$.
By Perron-Frobenius Theorem, 
the distance in total variation between the two distributions is thus of order
$|1-\rho \la^{\al_1}|^{\la^{-\de}}\sim_{\la\to 0} \exp(-\eta\la^{\al_1-\de})$, which is $O(\la^\ep)$ for any $\ep>0$ by the choice of $\de$.

\hfill $\square$

\begin{claim}\label{rr'} 
For any $t\geq 0$, $h>0$ and $\om\in R$ we have, as $h$ tends to $0$: \begin{itemize}
 \item [$(i)$] $\limla \PP^t_{\om}\left(X^\la_{(t+h)/\la}\in R'\right)=A^{[1]}(R,R')h+o(h)$;
 \item [$(ii)$] $\limla \PP^t_{\om}\left(X^\la_{(t+h)/\la}\in R\right)=1+A^{[1]}(R,R)h+o(h)$.
\end{itemize}
\end{claim}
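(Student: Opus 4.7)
The plan is to prove $(i)$ by decomposing on where the unique secondary transition occurs in $[T^\la_0, T^\la_h]$, and then deduce $(ii)$ from $(i)$ together with \eqref{imp2} and the identity $A^{[1]}(R,R) = -\sum_{R' \neq R} A^{[1]}(R,R')$.

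For $(i)$, I first apply \eqref{imp} to replace $\PP^t_\om(X^\la_{(t+h)/\la}\in R')$, up to an additive $o(h)$, by $\PP^t_\om\bigl(\{X^\la_{(t+h)/\la}\in R'\}\cap F^\la_{0,h}(0,1,0)\bigr)$. On this event the trajectory decomposes into three pieces: a primary-only piece on $[T^\la_0, T^\la_\de]$, a unique secondary jump $\om''\to\om'''$ with $\om''\in R$, $\om'''\notin R$ somewhere in $[T^\la_\de, T^\la_{h-\de}]$, and a primary-only piece on $[T^\la_{h-\de}, T^\la_h]$.

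Claim~\ref{tech}, applied to the aperiodic restriction of $P^{[1]}_\la$ to $R$, then guarantees that, conditional on no secondary transition in $[T^\la_0, T^\la_\de]$, the distribution of $X^\la_{T^\la_\de}$ lies within total variation $O(\la^\ep)$ of $\pi^{[1], R}$. Treating the starting distribution of the middle interval as $\pi^{[1], R}$ and invoking the strong Markov property, the expected number of secondary jumps from $\om''$ to $\om'''$ during $[T^\la_\de, T^\la_{h-\de}]$ is approximately $(h/\la - 2/\la^\de)\,\pi^{[1],R}(\om'')\,Q_\la(\om'',\om''')$, and the probability of exactly one such jump differs from this by $O(h^2) + O(\la^\ep)$ (the squared term handling two-or-more jumps by Claim~\ref{easy}$(i)$, which gives $o(h)$). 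After the jump to $\om'''$, Claim~\ref{easy}$(iii)$ applied to the suffix of length at least $1/\la^\de$ shows that the conditional probability of ending in $R'$ at time $T^\la_h$ is $\mu^{[1]}(\om''',R') + o(1)$, while the probability of any further secondary transition on $[T^\la_{h-\de}, T^\la_h]$ is $O(\la^{\al_2-\de}) = o(1)$ by Claim~\ref{easy}$(ii)$. Summing over $\om''\in R$ and $\om'''\notin R$, using that $h/\la - 2/\la^\de \sim h/\la$ since $\de<1$, and matching against the equivalent form \eqref{eqA} of $A^{[1]}$, I obtain
\[
\limla \PP^t_\om\bigl(X^\la_{(t+h)/\la}\in R'\bigr) \;=\; A^{[1]}(R,R')\,h + o(h).
\]
For $(ii)$, \eqref{imp2} together with $(i)$ applied to every $R'\neq R$ gives $\limla \PP^t_\om(X^\la_{(t+h)/\la}\in R) = 1 - \sum_{R'\neq R}A^{[1]}(R,R')\,h + o(h) = 1 + A^{[1]}(R,R)\,h + o(h)$, after checking (by one more application of Claim~\ref{easy}$(iii)$ to the last $1/\la^\de$ stages) that $\PP^t_\om(X^\la_{(t+h)/\la}\in \T^{[1]})$ contributes only $o(h)$.

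The main obstacle is periodicity: Claim~\ref{tech} only handles aperiodic $R$, and in the periodic case the conditional law of $X^\la_{T^\la_\de}$ oscillates over the period instead of converging to $\pi^{[1],R}$. The natural workaround is to average over the period $d$ of the restriction of $P^{[1]}_\la$ to $R$ as in Definition~\ref{extpos}, so that the formula holds for the extended position $\overline{P}_t$; this is consistent with Proposition~\ref{lp2} since $\overline{P}_t$ coincides with $P_t$ whenever the latter exists. The remaining points, namely the careful accounting of the $O(h^2)$, $O(\la^\ep)$ and $O(\la^{\al_2-\de})$ error terms, are routine.
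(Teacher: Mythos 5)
Your argument matches the paper's proof essentially step for step: reduce via \eqref{imp} to the event $F^\la_{0,h}(0,1,0)$, couple the law at $T^\la_\de$ to $\pi^{[1],R}$ using Claim~\ref{tech}, decompose on the stage and the target $\om'''\notin R$ of the unique secondary jump, apply Claim~\ref{easy}$(iii)$ to get the entrance law $\mu^{[1]}(\om''',R')$, count the $\sim h/\la$ admissible stages to match \eqref{eqA}, and deduce $(ii)$ from \eqref{imp2}. The only divergence is the periodic case: rather than retreating to the extended position $\overline{P}_t$, the paper keeps the claim exactly as stated by working with the subclass measures $\pi^{[1],R}_k$ and observing that the sum over the $\sim h/\la$ stages of the middle window already averages the cycling subclass laws to $\pi^{[1],R}=\frac{1}{d}\sum_{k=1}^{d}\pi^{[1],R}_k$, so no weakening of the statement is needed.
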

\noindent \textit{Proof of Claim~\ref{rr'}.} 
Assume first that $R$ is aperiodic. Thanks Claim \eqref{easy}-$(ii)$ and Claim~\ref{tech}, we can define some auxiliary random variable $\widetilde{X}^\la_\de$
distributed as $\pi^{[1],R}$ and such that 
$\PP(\widetilde{X}^\la_\de \neq X^\la_{T^\la_\de})=O(\la^{1-\de})$ as $\la$ tends to $0$.
Thus, up to an error which vanishes with $\la$, the distribution at stage $T^\la_\de$ is $\pi^{[1],R}$. Combining this coupling result with
\eqref{imp} yields, as $h$ tends to $0$:
\begin{eqnarray}\label{imp'}\limla \PP^t_\om(X^\la_{(t+h)/\la }\in R')&=&\limla \PP^t_{\pi^{[1],R}}\left(X^\la_{(t+h)/\la} \in R', \, F^\la_{0,h}(0,1,0)\right)+o(h).
\end{eqnarray}
To compute the right-hand-side of \eqref{imp'}, consider the following disjoint union:
$${F}^\la_{0,h}(0,1,0)=\bigcup_{\om'\notin R} \bigcup_{m=T^\la_\de}^{T^\la_{h-\de}}{F}^\la_{0,h}(m,\om'),$$
where ${F}^\la_{0,h}(m,\om')$ is the event of a secondary transition occurring at stage $m$, and not before nor after, to a state $\om'\notin R$. 
Notice that, by the choice of $\pi^{[1],R}$ and Claim~\ref{easy}, for any $m\in[T^\la_{0},T^\la_{h}]$:
$$\PP^t_\om({F}^\la_{0,h}(m,\om'))=(1-O(h))^2\sum_{\om\in R} \pi^{[1],R}Q_\la(\om,\om')=\sum_{\om\in R} \pi^{[1],R}Q_\la(\om,\om')+o(h).$$
On the other hand, by Claim \eqref{easy}-$(iii)$, for any $m\leq T^\la_{h-\de}$:
\begin{equation}\label{rff}\limla \PP^t_\om\left(X^\la_{(t+h)/\la}\in R' \, | \, {F}^\la_{0,h}(m,\om')\right)=\mu^{[1]}(\om',R').
\end{equation}
Consequently, as $h$ tends to $0$:
\begin{eqnarray*}
\limla \PP^\la_{\pi^{[1],R}}\left(X^\la_{T^\la_h} \in R', \, {F}^\la_{0,h}(0,1,0)\right)&=&
\limla \sum_{m=T^\la_\de}^{T^\la_{h-\de}}\sum_{\om'\notin R} \PP^\la_{\pi^{[1],R}}\left(X^\la_{T^\la_h}\in R', \, {F}^\la_{0,h}(m,\om')\right),\\
&=& \limla \left(\frac{h}{\la}-\frac{2}{\la^\de}\right)\left(\sum_{\om\in R, \, \om'\notin R}\pi^{[1],R}(\om)Q_\la(\om,\om')\mu^{[1]}(\om',R')+o(h)\right),\\
&=& \limla \left(h+O(\la^{1-\de})\right)\left( \frac{1}{\la}\sum_{\om'\in \Om^{[1]}\backslash R}Q^{[1]}_\la(R,\om')\mu^{[1]}(\om',R')+o(h)\right),\\
&=& hA^{[1]}(R,R')+o(h), 
\end{eqnarray*}
which gives $(i)$. Finally, $(ii)$ is now a consequence of the \eqref{imp2}.
The case where $R$ is periodic, needs minor changes. Note that periodicity can only happen if $\al_1=0$ for otherwise with positive probability the chain does not change of state.
Now, $R$ is then a disjoint union of $R^1\cup\dots \cup R^d$, and the restriction of $P_\la^{[1]}$ to these sets is aperiodic, with invariant measure $\pi^{[1],R}_k$ $(k=1,\dots,d$). One needs to take into account the subclass at time $T^\la_\de$ and define, in  the aperiodic case, some auxiliary random variable $\widetilde{X}^\la_{\de,k}$
distributed as $\pi^{[1],R}_k$ and such that $\PP(\widetilde{X}^\la_{\de,k} \neq X^\la_{T^\la_\de})=O(\la^{1-\de})$ as $\la$ tends to $0$. The results  then follows from the fact that $\pi^{[1],R}=\frac{1}{d}\sum_{k=1}^{d}\pi^{[1],R}_k$ and that, under the initial probability $\pi^{[1],R}_k$, the distribution at stage $m$ is $\pi^{[1],R}_{k+m}$, which is a shortcut for $\pi^{[1],R}_{k+m\,(\text{mod}\, d)}$.
The computation is now, for some $k$:
\begin{eqnarray*}
\limla \PP_{\pi^{[1],R}_k}\left(X^\la_{T^\la_h} \in R', \, {F}^\la_{0,h}(0,1,0)\right)&=&
\limla \sum_{m=T^\la_\de}^{T^\la_{h-\de}}\sum_{\om'\notin R} \PP_{\pi^{[1],R}_{k+m}}\left(X^\la_{T^\la_h}\in R', \, {F}^\la_{0,h}(m,\om')\right),\\
&=& \limla 
\frac{h}{\la }\frac{1}{d}\sum_{k=1}^d\left(\sum_{\om\in R, \, \om'\notin R} \pi^{[1],R}_{k}(\om)Q_\la(\om,\om')\mu^{[1]}(\om',R')+o(h)\right),\\
&=& h \limla \left( \frac{1}{\la} \sum_{\om\in R, \, \om'\notin R} \pi^{[1],R}(\om)Q_\la(\om,\om')\mu^{[1]}(\om',R')+o(h)\right),\\
&=& hA^{[1]}(R,R') +o(h),
\end{eqnarray*}
which proves the Claim. \hfill $\square$\\
Let us go back to the proof of Proposition~\ref{lp2}. Let $R=R^1\cup\dots R^d$ be a recurrence class of period 
$d\geq 1$. Consider four deterministic times as in Figure $2$, with $h>0$ and $t=0$, i.e. 
$$T^\la_0:=1, \quad T^\la_{\de}:= \frac{1}{\la^\de}, \quad T^\la_{h-\de}:=\frac{h}{\la}-
\frac{1}{\la^\de}, \quad T^\la_h:= \frac{h}{\la}.$$
For any $m\in \NN$, let $T^\la_h+m:=\lfloor h/\la \rfloor + m$. 
On the one hand, from Claim~\ref{rr'} (see Remark~\ref{flex}) one deduces that for any $R'\in \R^{[1]}$ and 
$\de'$ satisfying $1>\de'>\al_1$:
$$\limla \PP^\la_\om\left(X_{T^\la_{h-{\de'}}}\in R\, | \, X^\la_{T^\la_\de}\in R'\right)=\e^{A^{[1]}h}(R',R),$$
which, together with Claim~\ref{easy}-$(iii)$, yields:
\begin{equation}\label{dd2}\limla \PP^\la_\om\left(X_{T^\la_{h-\de'}}\in R\right)=\sum_{R'\in \R^{[1]}} \mu^{[1]}(\om,R')
\e^{A^{[1]}h}(R',R).
\end{equation}
By periodicity, if  $X^\la_{T^\la_{h-{\de'}}}\in R^r\subset R$, then $X^\la_{T^\la_h}\in R^{r+\lfloor 1/\la^{\de'}\rfloor \, (\mathrm{mod} \, d)}$.
Consequently, for any $\la>0$, $r=1,\dots,d$ and $D\in \NN^*$:
\begin{equation}\label{d1}\sum_{m=1}^{Dd} \ind_{\{ X^\la_{T^\la_h+m} \in R^{[k]}\}}=D.
\end{equation}
Thus, by Perron-Frobenius Theorem, for any $\om' \in R^{[k]}\subset R$, and $k=1,\dots, d$:
\begin{equation}\label{dd3}\limla \frac{1}{Dd}\sum_{r=1}^{Dd} \PP^\la_\om\left(X^\la_{T^\la_h+r} 
= \om' \, | \, X^\la_{T^\la_{h-\de'}}\in R\right)=\frac{D}{Dd}\pi_r^{[1],R}(\om')=\pi^{[1],R}(\om'),
\end{equation}
using the fact that $\pi^{[1],R}_{r'}(\om')=0$, for all $r'\neq r$.
Combining \eqref{dd2} and \eqref{dd3} one has, thanks to the definition of $N$, that for any $\om \in R\in \R^{[1]}$:
\begin{eqnarray}\label{d3}\limla \frac{1}{N}\sum_{r=1}^N \PP^\la_\om\left(X^\la_{T^\la_h+r} 
= \om'\right)&=&\sum_{R'\in \R^{[1]}} \mu^{[1]}(\om,R')\e^{A^{[1]}h}(R',R) \pi^{[1],R}(\om'),\\
&=& \mu^{[1]}\e^{A^{[1]}h}M^{[1]}(\om,\om'),
\end{eqnarray}
which proves Proposition~\ref{lp2}. \hfill $\square$ 

\subsection{Proof of Theorem~\ref{main} and Algorithm}\label{proof}
Theorem~\ref{main} can be proved using the same ideas, inductively. 
The first step is precisely Proposition~\ref{lp2}. If $Q^{[1]}_\la$ is not critical, proceed by steps. 
The aggregation of states is illustrated in Figure $3$.\\ 
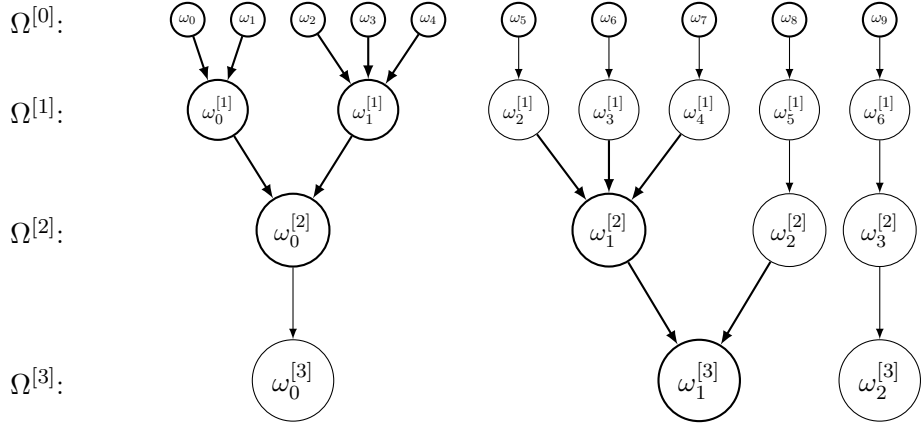
\begin{figure}\label{fig3}
\begin{center}
\begin{tikzpicture}[xscale=0.8, yscale=0.8]
\node  at (-7.5,0) {$\Om^{[0]}$:};
\node  at (-7.5,-1.5) {$\Om^{[1]}$:};
\node at (-7.5,-3.5) {$\Om^{[2]}$:};
\node at (-7.5,-6) {$\Om^{[3]}$:};
\node[draw,thick,circle,scale=0.55](om0) at (-5,0) {$\om_0$};
\node[draw,thick,circle,scale=0.55](om1) at (-4,0) {$\om_1$};
\node[draw,thick,circle,scale=0.55](om2) at (-3,0) {$\om_2$};
\node[draw,thick,circle,scale=0.55](om3) at (-2,0) {$\om_3$};
\node[draw,thick,circle,scale=0.55](om4) at (-1,0) {$\om_4$};
\node[draw,thick,circle,scale=0.55](om5) at (0.5,0) {$\om_5$};
\node[draw,thick,circle,scale=0.55](om6) at (2,0) {$\om_6$};
\node[draw,thick,circle,scale=0.55](om7) at (3.5,0) {$\om_7$};
\node[draw,thick,circle,scale=0.55](om8) at (5,0) {$\om_8$};
\node[draw,thick,circle,scale=0.55](om9) at (6.5,0) {$\om_9$};
\node[draw,thick,circle,scale=0.7](R0) at (-4.5,-1.5) {$\om^{[1]}_0$};
\node[draw,thick,circle,scale=0.7](R1) at (-2,-1.5)  {$\om^{[1]}_1$};
\node[draw,circle,scale=0.7](R2) at (0.5,-1.5) {$\om^{[1]}_2$};
\node[draw,circle,scale=0.7](R3) at (2,-1.5) {$\om^{[1]}_3$};
\node[draw,circle,scale=0.7](R4) at (3.5,-1.5) {$\om^{[1]}_4$};
\node[draw,circle,scale=0.7](R5) at (5,-1.5) {$\om^{[1]}_5$};
\node[draw,circle,scale=0.7](R6) at (6.5,-1.5) {$\om^{[1]}_6$};
\node[ draw,thick,circle, scale=.85](S0) at (-3.25,-3.5) {$\om^{[2]}_0$};
\node[draw,thick,circle, scale=.85](S1) at (2,-3.5) {$\om^{[2]}_1$};
\node[draw,circle, scale=.85](S2) at (5,-3.5) {$\om^{[2]}_2$};
\node[draw,circle, scale=.85](S3) at (6.5,-3.5) {$\om^{[2]}_3$};
\draw[->, thick,>=latex] (om0) to (R0); \draw[->, thick,>=latex] (om1) to (R0);
\draw[->, thick,>=latex] (om2) to (R1); \draw[->, thick,>=latex] (om3) to (R1);
 \draw[->, thick,>=latex] (om4) to (R1);
 \draw[->,thin,>=latex] (om5) to (R2);  \draw[->,thin,>=latex] (om6) to (R3);
 \draw[->,thin,>=latex] (om7) to (R4);  \draw[->,thin,>=latex] (om8) to (R5);
 \draw[->,thin,>=latex] (om9) to (R6);
\node[draw,circle,scale=0.95](U0) at (-3.25, -6) {$\om^{[3]}_0$};
\node[draw,circle,thick,scale=0.95](U1) at (3.5, -6) {$\om^{[3]}_1$};
\node[draw,circle,scale=0.95](U2) at (6.5, -6) {$\om^{[3]}_2$};
\draw[->,thick,>=latex] (R0) to (S0);
\draw[->,thick,>=latex] (R1) to (S0);
\draw[->,>=latex] (S0) to (U0);
\draw[->,thick,>=latex] (R2) to (S1);
\draw[->,thick,>=latex] (R3) to (S1);
\draw[->,>=latex,thick] (S1) to (U1);
\draw[->,thick,>=latex] (R4) to (S1);
\draw[->,>=latex](R5) to (S2);
\draw[->,>=latex](R6) to (S3);
\draw[->,>=latex](S3) to (U2);
\draw[->,>=latex,thick](S2) to (U1);
\end{tikzpicture}
\end{center}
\caption{Example of aggregation of states, for $k=3$. 
Since we only aggregate recurrence classes, we may deduce from the diagram that
 $\om^{[1]}_0, \om^{[1]}_1\in \R^{[1]}$,  $\om^{[2]}_0, \om^{[2]}_1\in \R^{[2]}$ and 
$\om^{[3]}_1 \in \R^{[3]}$. Recurrent states are indicated with a thicker border ($\R^{[0]}=\Om^{[0]}=\Om$ by definition). 
The diagram does not tell whether any of the other states are recurrent or transient, in their  corresponding  state spaces.} 
\end{figure}

\noindent\textbf{Initialisation (Step $0$).} Let $\Om^{[-1]}:=\emptyset$, $\R^{[0]}=\Om^{[0]}=\Om$ and $\T^{[0]}:=\emptyset$. 
Let $Q^{[0]}_\la:=Q_\la$,
$\pi^{[0],\om}=\de_{\om}$, for any $\om\in \Om$. The coefficients and exponents $c^{[0]}_{\om,\om'}, e^{[0]}_{\om,\om'},
c^{[0],\om}_{\om'}, e^{[0],\om}_{\om'}$ ($\om,\om'\in \Om$) are deduced from the definitions of $Q^{[0]}_\la$ and $\pi^{[0],\om}$.\\

\noindent \textbf{Induction (Step $k$, $k\geq1$).} The following quantities have already been defined, or computed, for $\ell=0,\dots,k-1$:
 $\R^{[\ell]}$, $\T^{[\ell]}$, $\Om^{[\ell]}$, 
$P^{[\ell]}_\la$, $Q^{[\ell]}_\la$, $\mu^{[\ell]}(u,R)$, 
$c^{[\ell]}_{u,v}$, $e^{[\ell]}_{u,v}$, $\pi^{[\ell],R}(w)$, $c^{[\ell],R}_w$, $e^{[\ell],R}_w$, 
for any $u,v\in \Om^{[\ell]}$, $w\in \Om^{[ \ell-1]}$ and $R\in \R^{[\ell]}$. 
Define $\al_{k}$ as follows:
\begin{equation}\label{defalk}\al_{k}:=\min\{e^{[k-2]}_{u,v}+e^{[k-1],R}_u \, | \, u\in \Om^{[k-2]}, \, u\in R\in \R^{[k-1]}, v\in \Om^{[k-2]}\backslash R \}.
\end{equation}
Note that this definition coincides with $\al_1$ and $\al_2$ (defined in Section~\ref{fast}) for $k=1,2$.
Define a stochastic matrix by setting $P^{[k]}_\la(\om,\om'):=Q_\la(\om,\om')\ind_{\{e_{\om,\om'}\leq \al_{k}\}}$ for all $\om'\neq \om\in \Om$. 
Compute its recurrence classes $\R^{[k]}$, its invariant measures $\pi_\la^{[k],R}$ and its transients states $\T^{[k]}$, and 
define $\Om^{[k]}:=\R^{[k]}\cup \T^{[k]}$. As in Corollary~\ref{jaff}, there exists $c^{[k],R}_\om>0$ and 
$e^{[k],R}_\om\in \{\al_{k}-\al_i \, | \, i=0,\dots,k\}$, for all $\om\in \Om^{[k-1]}$, $\om\in R\in \R^{[k]}$, such that:
$$\pi_\la^{[k],R}(\om)\sim_{\la\to 0}c^{[k],R}_\om \la^{e^{[k],R}_\om}.$$
Define the aggregated stochastic matrix $Q^{[k]}_\la$ over $\Om^{[k]}$ by setting:
\begin{equation}\label{defQk}
Q^{[k]}_\la(\om,\om') :=\begin{cases} Q_\la(\om,\om') & \text{ if } \om,\om' \in \T^{[k]};\\
                 \sum_{z'\in \om'} Q_\la(\om,z') & \text{ if } \om \in \T^{[k]}, \text{ and } \om'\in \R^{[k]};\\
                 \sum_{z\in \om} \pi^{[k],\om}(z)Q_\la(z,\om') & \text{ if } \om \in \R^{[k]}, \text{ and } \om'\in \T^{[k]};\\
                 \sum_{z\in \om, \, z'\in \om'}\pi^{[k],\om}(z)Q_\la(z,z') & \text{ if } \om,\om' \in \R^{[k]};
                \end{cases}
\end{equation}
Deduce $c^{[k]}_{\om,\om'}$ and $e^{[k]}_{\om,\om'}$ from \eqref{defQk}, for all $\om,\om'\in \Om^{[k]}$. 
If, for instance, $\om,\om'\in \R^{[k]}$:
\begin{eqnarray}e^{[k]}_{\om,\om'}&=&\min_{z\in \om, z'\in \om'} e^{[k-1]}_{z,z'}+ e^{[k],\om}_z ,\\
 c^{[k]}_{\om,\om'}&=&
\sum_{z\in \om, z'\in \om'}c^{[k-1]}_{z,z'}c^{[k],\om}_{z}\ind_{\{ e^{[k-1]}_{z,z'}+e^{[k],\om}_z=e^{[k]}_{\om,\om'} \}}.
\end{eqnarray}
\noindent \textbf{If $\al_{k}<1$, let $k:= k+1$ and go back to Step $k$.} \\ 
\noindent \textbf{If $\al_{k}\geq 1$, terminate.}\\
Now, the matrix $Q_\la^{[k]}$ is critical, so that the result of Section~\ref{crit} apply.
By construction, $\Om^{[\ell]}$ is a partition of $\Om^{[\ell-1]}$ for any $\ell=1,\dots,k$ (see Figure $3$). 
Thus, for any $\om\in \Om$, there exists a unique sequence $\om=\om^{[0]},\om^{[1]}, \dots,\om^{[k]}$ such that:
\begin{equation}\label{seq}\om^{[\ell]}\in \Om^{[\ell]}, \quad \text{and} \quad \om^{[\ell-1]}\in \om^{[\ell]}, \ \text{ for all } \ell=0,\dots, k.
\end{equation}
In particular, there exists a unique $\om^{[k]}\in \Om^{[k]}$ which contains the initial state 
$\om$. Define the entrance distribution $\mu:\Om\to \De(\R^{[k]})$ (see \eqref{entr}) setting,
for each $R\in \R^{[k]}$ and $\om\in \Om$:
$$\mu(\om,R)=\limn (P^{[k]}_\la)^n(\om,R).$$ 
Define the infinitesimal generator
$A:\R^{[k]}\times \R^{[k]}\to [0,\infty)$ (see \eqref{defA1}) by setting, for any $R\neq R'\in \R^{[k]}$:
\begin{eqnarray}\label{defAk}A^{[k]}(R,R')
&:=& \limla \frac{1}{\la}\left(\sum_{\om \in \Om^{[k]}\backslash R} Q^{[k]}_\la(R,\om)\mu(\om,R')\right),
\end{eqnarray}
and $A^{[k]}(R,R)=-\sum_{R'\neq R} A^{[k]}(R,R')$.
Finally, let $M:\R^{[k]}\to \De(\Om)$ be such that, for each $\om\in \Om$ and $R\in \R^{[k]}$ 
(where $\om=\om^{[0]},\om^{[1]}, \dots,\om^{[k]}=R$, satisfying \eqref{seq}):
$$M(R,\om):=\limla \prod_{\ell=1}^{[k]} \pi_\la^{[\ell],\om^{[\ell]}}(\om^{[\ell-1]}).$$
We thus obtain $\mu$, $A$ and $M$ from which $P_t$ can be computed, for all $t\geq 0$. We finish this section 
by justifying $P_t=\mu \e^{At}M$.
\begin{claim}\label{last}
$P_t$ exists for all $t\geq 0$. For any $\om,\om'\in \Om$:
\begin{equation}\label{final}
P_t(\om,\om'):=\limla \frac{1}{N}\sum_{r=1}^{N} \PP^\la_\om\left(X^\la_{t/\la+r} 
= \om' \right)=\mu^{[k]}\e^{A^{[k]}t}M^{[k]}(\om,\om').
\end{equation}
 \end{claim}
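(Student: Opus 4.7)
The plan is to prove the claim by induction on $k$, the step at which the algorithm terminates. The base case $k=1$ is exactly Proposition~\ref{lp2}, since $\mu^{[1]}$, $A^{[1]}$, $M^{[1]}$ coincide with the $\mu$, $A$, $M$ produced by the algorithm when it stops at step $1$.

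For the inductive step, the key observation is that the aggregated family $(Q^{[1]}_\la)_\la$ on the smaller state space $\Om^{[1]}$ also satisfies Assumption~1 with the coefficients $c^{[1]}_{\om,\om'}$ and exponents $e^{[1]}_{\om,\om'}$ extracted from \eqref{q1}. Moreover, by construction, running the algorithm on $(Q^{[1]}_\la)_\la$ produces at each level $\ell\ge 2$ exactly the same objects $\R^{[\ell]}$, $\pi^{[\ell], R}$, $Q^{[\ell]}_\la$, $A^{[\ell]}$, $\Om^{[\ell]}$ as when it is applied to $(Q_\la)_\la$. Hence this algorithm terminates in $k-1$ steps, and by the inductive hypothesis the analogous ``position at time $t$'' on $\Om^{[1]}$ is $\widetilde{P}_t=\widetilde{\mu}\,\e^{A^{[k]}t}\,\widetilde{M}$, where $\widetilde{\mu}$ and $\widetilde{M}$ are naturally compatible with $\mu$ and $M$ modulo the first aggregation level.

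To lift this from $\Om^{[1]}$ back to $\Om$, I would repeat the three-phase scheme of Proposition~\ref{lp2} with the four deterministic times $T^\la_0,T^\la_{\de'},T^\la_{h-\de'},T^\la_h$ for some $\al_1<\de'<1$: in the short transient $[T^\la_0,T^\la_{\de'}]$, Claim~\ref{easy}-$(iii)$ shows the chain enters a class $R'\in\R^{[1]}$ with probability $\mu^{[1]}(\om,R')$; in the bulk $[T^\la_{\de'},T^\la_{h-\de'}]$, the projection $\Phi(X^\la)$ onto $\Om^{[1]}$ is asymptotically Markovian with transition $Q^{[1]}_\la$ by the very definition \eqref{q1}, so the inductive hypothesis delivers the distribution of the aggregated chain at time $t$; finally, in the spreading phase $[T^\la_{h-\de'},T^\la_h]$, conditional on being in $R\in\R^{[k]}$ the law on $\Om$ equilibrates through successive coupling arguments at each level $\ell=k,k-1,\dots,1$, producing the product of invariant measures $\prod_{\ell=1}^k \pi^{[\ell],\om^{[\ell]}}(\om^{[\ell-1]})$ along the unique hierarchy $\om=\om^{[0]}\in\om^{[1]}\in\dots\in\om^{[k]}=R$ described in \eqref{seq}. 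This product is exactly $M(R,\om')$, and the averaging over $N$ stages absorbs the first-level periodicity as in the proof of Proposition~\ref{lp2}.

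The main obstacle is the spreading phase: establishing a \emph{multi-level} equilibration that produces the product of invariant measures, especially when some intermediate recurrence classes are periodic. One needs to insert a hierarchy of auxiliary time scales $1/\la^{\de_\ell}$, one per level with $\al_\ell<\de_\ell<\al_{\ell+1}$, apply a coupling analogous to Claim~\ref{tech} at each level against the local invariant measure $\pi^{[\ell],R}$, and control the accumulated error, which requires the spectral gap estimate from Gershgorin plus Perron--Frobenius to be uniform across levels. The fact that $N$ is defined via $\widetilde{Q}_\la$ (fastest transitions only) rather than iteratively is convenient here: it handles all periodicities introduced along the aggregation hierarchy at once, via a single cyclic averaging at the final step.
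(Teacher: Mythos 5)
Your proposal is correct and follows essentially the same route as the paper: the paper also proves the claim by iterating the one-step aggregation of Proposition~\ref{lp2}, inserting a hierarchy of deterministic times $T(\la,\de_\ell)$ with $\al_\ell<\de_\ell<\al_{\ell+1}$ at which each level equilibrates to its invariant measure $\pi^{[\ell],\om^{[\ell]}}$, so that the telescoping product of conditional probabilities yields $\mu\,\e^{At}$ times $\prod_{\ell}\pi^{[\ell],\om^{[\ell]}}(\om^{[\ell-1]})=M(R,\om')$, with the final $N$-averaging absorbing the first-level periodicity. Your packaging as an explicit induction on $k$ (base case Proposition~\ref{lp2}, then lifting from $\Om^{[1]}$ back to $\Om$) is just a reorganization of the same argument.
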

\begin{proof}
For any $0<\de<1$, let $T(\la,\de):=t/\la-1/\la^\de$. 
Let $\om'=\om^{[0]},\om^{[1]}, \dots,\om^{[k]}$ be a sequence satisfying \eqref{seq}. 
Let $\de_\ell$ ($\ell=1,\dots,k-1$) satisfy: $$0\leq \al_1<\de_1<\al_2<\dots<\al_{k-1}<\de_{k-1}<1\leq \al_k.$$ 
In particular, $t/\la\gg T(\la,\de_1) \gg\dots \gg T(\la,\de_{k-1})$. 
On the one hand, by Proposition~\ref{lp} (see Remark~\ref{flex}), for any $t>0$ and $R\in \R^{[k]}$: 
\begin{eqnarray}\label{insat}
\limla 
\PP^\la_\om\left(X^\la_{T(\la,\de_{k-1})} \in R\right) 
&=& 
  P^{[k]}\e^{A^{[k]}t}(\om,R).
  \end{eqnarray}
$$\limla \PP^\la_\om\left(X^\la_{T(\la,\de_{\ell-1})} 
= \om^{[\ell-1]} \, | \, X^\la_{T(\la,\de_{\ell})}\in \om^{[\ell]} \right)=
\limla  \pi_\la^{[\ell],\om^{[\ell]}}(\om^{[\ell-1]}).$$
Thus, multiplying \eqref{insat} and the latter over all $\ell=2,\dots,k$ 
yields:
\begin{eqnarray}\label{sat}
\limla
\PP^\la_\om\left(X^\la_{T(\la,\de_{1})} = \om^{[1]}\right)
&=& 
  \mu^{[k]}\e^{A^{[k]}t}(\om^{[k]})\left(\limla \prod_{\ell=2}^{[k]} \pi_\la^{[\ell],\om^{[\ell]}}(\om^{[\ell-1]})\right).
  \end{eqnarray}
To avoid the periodicity issues of the first 
order transitions, it is enough to consider the times $t/\la+1, \dots,t/\la+N$. One obtains, exactly in the same way as
 in \eqref{d1} and \eqref{dd3}:
\begin{equation}\limla \frac{1}{N}\sum_{r=1}^{N} \PP^\la_\om\left(X^\la_{t/\la+r} 
= \om^{[0]} \, | \, X^\la_{T(\la,\de_{1})}\in \om^{[1]}\right)=\pi^{[1],\om^{[1]}}(\om^{[0]}),
\end{equation}
for each $\om^{[0]}\in \Om$ and $\om^{[1]}\in \R^{[1]}$, which concludes the proof.
\end{proof}
\subsection{Relaxing Assumption 1}
Though quite natural, Assumption $1$ can be relaxed by noticing that $\mu$, $A$ and $M$, and consequently, $P_t$, 
depend only on the relative speed of convergence of the mappings $\la\mapsto Q_\la(\om,\om')$, for $\om\neq \om'$ and 
$\la\mapsto \la$, and of some products between them. 
Define: 
$$\F_Q:=\{\la\mapsto \la, (\la\mapsto Q_\la(\om,\om')), \ \om\neq \om\in \Om \}.$$
One can then replace Assumption $1$ by:\\ 
\textbf{Assumption 1':} For any $A,B\subset \F_Q$, the $\limla \frac{\prod_{a\in A} a(\la)}
{\prod_{b \in B} b(\la)}$ exists in $[0,\infty]$.\\
To perform the algorithm described in Section~\ref{proof} it is enough to use \cite[Proposition 2]{OB14}, which implies that 
if $(Q_\la)_\la$ satisfies Assumption 1', there exists coefficients and exponents
$(c_a,e_a)_{a\in \F_Q}$ such that:
\begin{equation}\label{ass'}\limla \frac{\displaystyle\prod_{a \in A} a(\la)}
{\displaystyle\prod_{b \in B} b(\la)}=\limla \frac{\displaystyle\prod_{a \in A} 
c_a \la^{e_a}}
{\displaystyle\prod_{b\in B} c_b \la^{e_b}}, \quad \forall A,B\subset \F_Q.
\end{equation}
The coefficient and exponent corresponding to $\la\mapsto \la$ are, of course, equal to $1$.

\subsection{Illustration of the algorithm}\label{ill}
\begin{figure}\label{fig4}
\begin{center}
\begin{tikzpicture}
\node [above] at (4,2.5) {Initial state};
\node[draw,circle,scale=0.7](1) at (0,0) {$1$};
\node[draw,circle,scale=0.7](2) at (1,0) {$2$};
\node[draw,circle,scale=0.7](3) at (2,0) {$3$};
\node[draw,circle,scale=0.7](4) at (4,0) {$4$};
\node[draw,circle,scale=0.7](5) at (4,2) {$5$};
\node[draw,circle,scale=0.7](6) at (4,1) {$6$};
\node[draw,circle,scale=0.7](7) at (7,0) {$7$};
\node[draw,circle,scale=0.7](8) at (8,0) {$8$};
\draw [->,>=latex] (1) [bend right=-50] to (2);
\draw [->,>=latex] (2) [bend right=-50] to (3);
\draw [->,>=latex] (3) [bend right=-50] to (1);
\draw [->,>=latex] (6) [bend right=25] to (4);
\draw [->,>=latex] (4) [bend right=50] to (5);
\draw [->,>=latex] (7) [bend right=-50] to (8);
\draw [->,>=latex] (8) [bend right=-50] to (7);
\draw [->,>=latex] (5) [bend right=15] to (0,0.25);
\draw [->,>=latex] (5) [bend right=25] to (6);
\draw [->,>=latex] (5) [bend right=-15] to (8,0.25);
\node at (2,1.8) {$\frac{1}{3}$}; \node at (3.6,1.5) {$\frac{1}{3}$};
\node at (6,1.8) {$\frac{1}{3}$};
\node [scale=0.9]at (7.5,-0.6) {$1$};
\node [scale=0.9]at (7.2,0.5) {$1$};
\node [scale=0.9] at (1,-0.8) {$e\la^{3/5}$};
\node [scale=0.8] at (0.9,0.6) {$a\la^{1/5}$};
\node [scale=0.8] at (1.85,0.6) {$b\la^{2/5}$};
\node [scale=0.9] at (4.9, 1) {$g\la$};
\node [scale=0.8] at (3.3,0.5) {$d\la^{1/5}$};
\draw [->,>=latex] (1) [bend right=80] to (4);
\draw [->,>=latex] (2) [bend right=-70] to (4);
\node [scale=0.8] at (2.7,1.3) {$f\la^{4/5}$};
\node [scale=0.8] at (3,-1) {$e\la^{3/5}$};
\end{tikzpicture}
\end{center}
\caption{Illustration of $Q_\la^{[0]}$.} 
\end{figure}

Suppose that $Q_\la$ is a stochastic matrix over $\Om=\{1,\dots,8\}$ satisfying Assumption $1$, such
that, for some $a,b,c,d,e,f,g,h>0$: 
%
\[Q_\la\sim_{\la\to0}\begin{pmatrix} 
1 & a\la^{1/5} & 0 &e\la^{3/5} & 0& 0  & 0& 0\\ 
0 & 1 & b\la^{2/5} & f\la^{4/5}& 0 & 0  & 0& 0\\
c\la^{3/5} & 0 & 1 &0 & 0& 0  & 0& 0\\
0 & 0& 0  &1 & g\la & 0  & 0& 0\\
1/3 & 0 & 0 & 0&  0 & 1/3  & 0 & 1/3\\
0 & 0 & 0 &d \la^{1/5} & 0& 1  & 0& 0\\
0 & 0 & 0 &0 & 0& 0  & 0 & 1\\
0 & 0 & 0 &0 & 0& 0  & 1 & 0
        \end{pmatrix}
\]
See Figure $4$ for an illustration. For simplicity, let us fix an initial, say $5$, and compute $P_t(5,k)=\limla Q_\la^{t/\la}(5,k)$, for any $t>0$ and $k\in \Om$.
We use the definition \eqref{defalk} to compute $0\leq \al_1\leq \dots \leq \al_k$. \\
\textbf{Step $1$}. $\al_1=0$. 
$$\R^{[1]}=\{1,2,3,4,6,u\} \text{ and } \T^{[1]}=\{5\},$$
where $u:=\{7,8\}$ is a $2$-periodic recurrence class. Computes the (nontrivial) entrance law $P^{[1]}(5,1)=
P^{[1]}(5,6)=P^{[1]}(5,u)=1/3$ 
and the invariant measure $\pi^{[1],u}=\frac{1}{2}\de_7+\frac{1}{2}\de_8$. 
Since there are non-trivial recurrence classes, one defines the aggregated matrix $Q^{[1]}_\la$.\\
\textbf{Step $2$}. $\al_2=1/5$. 
The transitions of order $\al_2$ are $1\mapsto 2$ and $6\mapsto 4$.
$$\R^{[2]}=\{2,3,4,u\} \text{ and } \T^{[2]}= \{1,5,6\}.$$ 
\noindent \textbf{Step $3$.} $\al_3=2/5$. 
The only transition of order $\al_3$ is $2\mapsto 3$. 
$$\R^{[2]}=\{3,4,u\} \text{ and } \T^{[2]}=\{1,2,5,6\}.$$ 
\noindent \textbf{Step $4$.} $\al_4=3/5$. 
The only transition of order $\al_4$  is $3\mapsto 1$. The subset $v:=\{1,2,3\}$ is now a recurrence class
$$\R^{[4]}=\{v,4,u\} \text{ and } \T^{[4]}=\{5,6\}.$$ 
 Compute the invariant measure $\pi^{[4],v}_\la$. 
Clearly, 
$\widehat{\pi^{[4],v}}=\frac{1}{3}\de_1+\frac{1}{3}\de_2+\frac{1}{3}\de_3$, for it is a cycle. By Corollary~\ref{jaff}:
\[\pi^{[4],v}_\la(1)\sim \frac{\frac{1/3}{a\la^{1/5}}}{\frac{1/3}{a\la^{1/5}}+\frac{1/3}{b\la^{2/5}}+\frac{1/3}{c\la^{3/5}}}\sim \frac{c}{a}\la^{2/5},\]
and similarly state $2$ and $3$, so that $\pi_\la^{[4],v} \sim_{\la\to 0} \frac{c}{a}\la^{\frac{2}{5}}\de_1+\frac{c}{b}\la^{\frac{1}{5}}\de_2+\de_3$.
Since there are non-trivial recurrence classes, one defines the aggregated matrix $Q^{[4]}_\la$ (see Figure $5$).\\
\begin{figure}\label{fig5}
\begin{center}
\begin{tikzpicture}
\node [above] at (4,2.5) {Initial state};
 \node[draw,thick,circle,scale=1.2](v) at (1,0) {$v$};
\node[draw,circle,scale=1.2](4) at (4,0) {$4$};
\node[draw,circle,scale=0.8](5) at (4,2) {$5$};
\node[draw,circle,scale=0.8](6) at (4,1) {$6$};
\node[draw,thick, circle,scale=1.2](u) at (7.5,0) {$u$};
\draw [->,>=latex] (6) [bend right=25] to (4);
\draw [->,>=latex] (4) [bend right=50] to (5);
\draw [->,>=latex] (5) [bend right=15] to (v);
\draw [->,>=latex] (5) [bend right=25] to (6);
\draw [->,>=latex] (5) [bend right=-15] to (u);
\node at (2,1.6) {$\frac{1}{3}$}; \node at (3.6,1.5) {$\frac{1}{3}$};
\node at (6.1,1.6) {$\frac{1}{3}$};
\node  at (5, 1) {$g\la$};
\node [scale=0.8] at (3.3,0.6) {$d\la^{1/5}$};
\draw [->,>=latex] (v) [bend right=50] to (4);
\node  at (2.5,-1.2) {$\left(\frac{c}{a}e+\frac{c}{b}f\right)\la$};
\end{tikzpicture}
\end{center}
\caption{Illustration of $Q_\la^{[4]}$.}
\end{figure}
\noindent \textbf{Step $5$.} $\al_5=1$. Terminate.
Compute the infinitesimal generator over $\R^{[4]}$:
\[A=\begin{pmatrix}-\left(\frac{c}{a}e+\frac{c}{b}f\right) &  \phantom{-}\frac{c}{a}e+\frac{c}{b}f  & 
0\\
\frac{1}{3}g & -\frac{2}{3}g & \frac{1}{3}g\\ \phantom{-}0&\phantom{-}0&0 
\end{pmatrix}.\]
On the other hand, the entrance measure is $P=\frac{1}{3}\de_v+\frac{1}{3}\de_4+\frac{1}{3}\de_u$. 
Finally: $$M(v,\cdot)=\limla \pi_\la^{[4],v}=\de_3, \ M(4,\cdot)=\de_4 \text{ and } M(u,\cdot)=\limla \pi^{[1],u}=\frac{1}{2}\de_7+\frac{1}{2}\de_8.$$
The periodicity yields $N=2$, so that for any $k\in \Om$:
\[P_t(5,k)=\limla \frac{1}{2}\sum_{r=1}^2 \PP^\la_5(X^\la_{t/\la+r}=k)=\mu \e^{At}M(5,k).\]

\bibliographystyle{amsplain} 
\bibliography{bibliothese2} 

\providecommand{\bysame}{\leavevmode\hbox to3em{\hrulefill}\thinspace}
\providecommand{\MR}{\relax\ifhmode\unskip\space\fi MR }
\providecommand{\MRhref}[2]{%
  \href{http://www.ams.org/mathscinet-getitem?mr=#1}{#2}
}
\providecommand{\href}[2]{#2}
\begin{thebibliography}{1}

\bibitem{BK76}
T.~Bewley and E.~Kohlberg, \emph{The asymptotic theory of stochastic games},
  Mathematics of Operations Research \textbf{1} (1976), 197--208.

\bibitem{HL26}
G.H. Hardy and J.E. Littlewood, \emph{A further note on the converse of abel's
  theorem}, Proceedings of the London Mathematical Society \textbf{2} (1926),
  no.~1, 219--236.

\bibitem{OB14}
M.~Oliu-Barton, \emph{The asymptotic value in stochastic games}, Mathematics of
  Operations Research \textbf{39} (2014), 712--721.

\bibitem{schweitzer68}
P.J. Schweitzer, \emph{Perturbation theory and finite markov chains}, Journal
  of Applied Probability \textbf{5} (1968), no.~2, 401--413.

\bibitem{shapley53}
L.S. Shapley, \emph{Stochastic games}, Proceedings of the National Academy of
  Sciences of the United States of America \textbf{39} (1953), 1095--1100.

\end{thebibliography}

\end{document}